\documentclass[10 pt, a4 paper, oneside, openany]{article}
\usepackage[T1]{fontenc}
\usepackage{amsmath}
\usepackage[utf8]{inputenc}
\usepackage[english]{babel}
\usepackage{graphicx}
\usepackage{enumerate}
\usepackage{enumitem}
\usepackage{amsthm}
\usepackage{amsfonts}
\usepackage{amssymb}
\usepackage{amsthm}
\usepackage{wrapfig}
\usepackage{pgfplots}
\pgfplotsset{compat=1.15}
\usepackage{mathrsfs}
\usepackage{hyperref}
\usetikzlibrary{arrows}
\usepackage[swapnames, nouppercase]{frontespizio} 
\usepackage{geometry}
\usepackage{tikz}
\usepackage{float}
\usepackage{appendix}
\usepackage{setspace}
\setlength\parindent{0pt}
\allowdisplaybreaks

\newgeometry{right=1.5cm} 

\geometry{a4paper,top=3.4cm,bottom=3.4cm,left=3.5cm,right=2.5cm}
\linespread{1}
\newtheorem{teorema}{Theorem}[section]

\newtheorem{lemma}[teorema]{Lemma}
\newtheorem{proposizione}[teorema]{Proposition}

\theoremstyle{definition}
\newtheorem{osservazione}[teorema]{Remark}
\newtheorem{definition}[teorema]{Definition}

\begin{document}
\title{ Finite Index and Do Carmo’s Question\\ for Constant Mean Curvature Hypersurfaces}
\date{}
\author{Barbara Nelli, Claudia Pontuale}

\maketitle

\begin{abstract}
We prove that any  finite $\delta$-index hypersurface $M$ in ${\mathbb R}^{n+1}$  with constant mean curvature must be minimal, provided
either of the following conditions holds: \begin{itemize}
\item the volume growth of $M$ is sub-exponential;
\item the Ricci curvature of $M$ satisfies $\operatorname{Ric}_M\geq -\frac{3(1-\delta)}{n-1}|A|^2g,$ where  $A$ is the second fundamental form and $g$ is the metric on $M.$ 
\end{itemize}
 In the second case, our result further implies that, in addition to being minimal, such an  $M$  must be a hyperplane.

We emphasize that no restriction on the dimension is imposed. Moreover, the  statement in the second case is new even for finite index hypersurfaces ($\delta=0$).
\end{abstract}

\let\thefootnote\relax\footnote{The authors were partially supported by PRIN-2022AP8HZ9 and  INdAM-GNSAGA.}

\noindent {\bf MSC 2020 subject classification:}  {53A10, 53C42.}

\noindent{{\bf Keywords:} constant mean curvature, stability, finite index.}

\section*{Introduction}

In \cite{DC} in 1989 Manfredo  Perdig\~{a}o Do Carmo asked: 

\

{\em 
Is a noncompact, complete, stable, constant mean curvature  hypersurface  of ${\mathbb R}^{n+1}$ necessarily minimal?}

\
 
 The question was motivated by the fact that the answer is positive in ${\mathbb R}^3$ \cite{LR, Si}. Moreover, in the case of ${\mathbb R}^3,$  the generalization of Bernstein theorem to stable minimal surfaces \cite{DoPe, FCSc, Po} yields that the surface is finally a plane.

Do Carmo's question has been investigated  in ${\mathbb R}^{n+1}$ and in more general ambient manifolds and in the broader class of finite index constant mean curvature  hyperurfaces.
 A positive answer for $n=3, 4$ was obtained independently by  \cite{elbertnelliros} and Cheng \cite{cheng}, using the {\em Bonnet-Myers method} nowadays also known as the {\em conformal method}.
 More precisely, there is no stable hypersurface with  constant mean curvature $H$ ($H$-hypersurface) in a manifold  $N$ of dimension $n+1,$ $n=3,4$ provided $H$ is large enough with respect to the sectional curvature of the ambient manifold $N$. In particular, in the Euclidean space this implies nonexistence for any $H\not=0$. 
 Moreover, partial results for a hypersurface $M$ of arbitrary dimension have been obtained by several authors (see for example  \cite{alencardocarmo} for $M$ with polynomial volume growth , \cite{entropies} for $M$ with subexponential volume growth  and \cite{AlCa, CaZh} for $M$  satisfying  assumptions on the total curvature, and  $n\leq 5$).
 
Recently, advances have been made in low dimension. Using the $\mu$-bubble technique introduced by O. Chodosh, C. Li, P. Minter, D. Stryker in \cite{chodoshlimintstryker} (see also \cite{mazet}), J. Chen, H. Hong, H. Li \cite{chenhongli} answerd positively to Do Carmo question in $\mathbb R^6$. 

In this context,  it is natural to ask Do Carmo’s question in the broader framework of $\delta$-stable hypersurfaces (see Definition \ref{def-deltastable}). 

Tinaglia and Zhou \cite{tinagliazhou} answered affirmatively to Do Carmo's question for $\delta$- stable H-hypersurfaces in $\mathbb R^{n+1},$ $n=2,3,4$  for certain values of $\delta,$ using  the conformal method. In their recent paper \cite{chenhongli}, J. Chen, H. Hong, H. Li  also noted that their approach by the $\mu$-bubble method to the classical Do Carmo problem applies to the $\delta$-stable case in $\mathbb{R}^6$, again for some values of $\delta$.

We notice that, most of the results we cited,  hold for the more general class of finite $\delta$-index hypersurfaces (see Definition \ref{def-deltaindex}).

Our interest here is twofold. On the one hand, in the context of Do Carmo’s problem, we generalize to the finite 
$\delta$-index case some results available in arbitrary dimension for hypersurfaces with finite index. On the other hand, we look for new geometric conditions under which Do Carmo’s question has a positive answer in any dimension within the  class of finite 
$\delta$-index hypersurfaces.

Our main results concerning Do Carmo's question in the Euclidean space  are the following.

\begin{teorema}  
\label{docarmo-entropy-intro}
   There are no complete non-compact finite $\delta$-index   $H$-hypersurfaces with $H\not=0$ and subexponential volume growth, immersed in ${\mathbb R}^{n+1}$.\end{teorema}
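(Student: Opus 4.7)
The plan is to show that the finite $\delta$-index assumption forces exponential growth of the intrinsic volume of geodesic balls on $M$, which will contradict the subexponential volume growth hypothesis. The mechanism is classical: $\delta$-stability outside a compact set, combined with the pointwise algebraic bound $|A|^2\ge nH^2$ (Cauchy--Schwarz on the principal curvatures of an $H$-hypersurface), upgrades to a Poincar\'e-type inequality; plugging in cutoffs of the intrinsic distance function then produces a multiplicative recursion on the volumes of concentric intrinsic balls.

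First, I will use the definition of finite $\delta$-index to fix a compact set $K\subset M$ outside of which the stability inequality
\[
(1-\delta)\int_M |A|^2\varphi^2 \le \int_M |\nabla\varphi|^2
\]
holds for every Lipschitz $\varphi$ compactly supported in $M\setminus K$. Because $H\ne 0$ and $\delta<1$, the bound $|A|^2\ge nH^2$ upgrades this to a Poincar\'e-type inequality
\[
(1-\delta)\,nH^2\int_M \varphi^2 \le \int_M |\nabla\varphi|^2,
\]
valid on the same class of test functions.

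Next, I will fix $p\in M$ and $R_1>0$ such that $K\subset B_{R_1}(p)$, where $B_R(p)$ denotes the intrinsic geodesic ball of radius $R$. Inserting as test function a piecewise-linear cutoff $\eta_R$ of the intrinsic distance to $p$ which vanishes on $B_{R_1+1}(p)$ and outside $B_{R+1}(p)$, equals $1$ on $B_R(p)\setminus B_{R_1+2}(p)$, and satisfies $|\nabla\eta_R|\le 1$ almost everywhere, the above inequality rearranges into the linear recursion
\[
\mathrm{Vol}(B_{R+1}(p)) \ge \bigl(1+(1-\delta)nH^2\bigr)\,\mathrm{Vol}(B_R(p)) - C,
\]
for all $R\ge R_1+2$, with $C=C(R_1,H,\delta)$ a fixed constant. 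Iterating (starting from a radius where the volume already exceeds the threshold $C/((1-\delta)nH^2)$) produces an exponential lower bound $\mathrm{Vol}(B_R(p))\ge c_0\lambda^R$ with $\lambda=1+(1-\delta)nH^2>1$, which contradicts the subexponential volume growth of $M$.

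The only delicate step will be the second one: I have to make sure that the inner boundary contributions, coming from the annulus near $K$ where $\eta_R$ transitions from $0$ to $1$, are harmless. Since $K$ is compact and the Lipschitz constant of $\eta_R$ is bounded, these terms contribute only a fixed constant independent of $R$, absorbed into $C$; the outer boundary contribution from the annulus $B_{R+1}(p)\setminus B_R(p)$ is precisely what produces the geometric factor $1+(1-\delta)nH^2$ in the recursion. A secondary point is to check that the volume $\mathrm{Vol}(B_R(p))$ does eventually exceed the threshold required to start iterating, which follows because otherwise $\mathrm{Vol}(M)<\infty$ and the same recursion applied in the limit $R\to\infty$ already forces $\mathrm{Vol}(M)\le C/((1-\delta)nH^2)$, allowing a separate argument via cutoffs approximating the indicator of $M\setminus K$.
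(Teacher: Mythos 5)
Your first step coincides with the paper's: finite $\delta$-index gives a compact set $K$ such that $M\setminus K$ is $\delta$-stable, and since $\operatorname{Ric}_{\mathbb{R}^{n+1}}=0$ and $|A|^2\ge nH^2$, the $\delta$-stability inequality becomes exactly the Poincar\'e-type bound $\lambda_0(M\setminus K)\ge (1-\delta)nH^2>0$. Where you genuinely diverge is the second half. The paper stops at this point and invokes Brooks' theorem (Theorem \ref{brooks}), which gives $\lambda_0(M\setminus K)\le \lambda_0^{\mathrm{ess}}(M)\le \mu_M^2/4=0$ under subexponential volume growth, an immediate contradiction. You instead re-derive the relevant special case of Brooks' inequality by hand: inserting annular cutoffs into the Poincar\'e inequality yields the recursion $\mathrm{Vol}(B_{R+1})\ge \bigl(1+(1-\delta)nH^2\bigr)\mathrm{Vol}(B_R)-C$, whose iteration forces exponential volume growth. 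The recursion is correct (with $C=\bigl(1+(1-\delta)nH^2\bigr)\mathrm{Vol}(B_{R_1+2})$), and what it buys is a self-contained, citation-free argument at the cost of some bookkeeping.

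The one place your sketch does not close is the case in which the iteration never starts, i.e.\ $\mathrm{Vol}(B_R)\le C/((1-\delta)nH^2)$ for every $R$, so that $\mathrm{Vol}(M)<\infty$. Your proposed fix --- cutoffs approximating the indicator of $M\setminus K$ --- only produces an inequality of the form $(1-\delta)nH^2\,\mathrm{Vol}(M\setminus K'')\le \mathrm{Vol}(K''\setminus K')$, which is not absurd by itself: a complete noncompact manifold of finite volume can perfectly well satisfy $\lambda_0(M\setminus K)>0$ (a hyperbolic cusp does), so positivity of the spectral gap outside a compact set does not rule out finite volume. You need an independent reason why $M$ has infinite volume. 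To be fair, the paper relies on the same fact implicitly, since Brooks' theorem as quoted assumes infinite volume; the standard way to discharge it is the monotonicity formula for hypersurfaces of bounded mean curvature in $\mathbb{R}^{n+1}$, which shows complete noncompact such hypersurfaces have infinite volume. The gap is therefore fillable, but in your write-up it should be filled explicitly rather than deferred to a ``separate argument'' that, as described, does not yield a contradiction.
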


 We remark that the condition of having subexponential volume growth is weaker than any polynomial bound on the volume growth and that the analogous result for finite index hypersurfaces ($\delta=0$) was proved in \cite{entropies}.

\

In addressing do Carmo’s question, the next theorem introduces a hypothesis that, to the best of our knowledge, has not previously been considered. Nevertheless, this perspective will be shown to be quite natural (see Chapter 3).

\begin{teorema}
\label{docarmo-riccibounded-intro}
  A complete, non-compact, finite $\delta$-index, $H$-hypersurface $M$ with $$\operatorname{Ric}_M \ge- \frac{3(1-\delta)}{n-1}|A|^2g,$$ immersed ${\mathbb R}^{n+1}$ is a hyperplane.
 \end{teorema}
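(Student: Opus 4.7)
The plan is to prove that the hypotheses force $|A|\equiv 0$ on $M$; combined with completeness, this will identify $M$ with an affine hyperplane of $\mathbb R^{n+1}$. By the finite $\delta$-index assumption, fix a compact $K\subset M$ outside of which $M$ is $\delta$-stable, so
\[(1-\delta)\int_M|A|^2 f^2 \le \int_M|\nabla f|^2 \qquad \forall\,f\in C^\infty_c(M\setminus K).\]
Working in a principal-curvature frame where $\operatorname{Ric}_{ii}=\lambda_i(nH-\lambda_i)$, the Ricci hypothesis reads $\lambda_i^2-nH\lambda_i\le\tfrac{3(1-\delta)}{n-1}|A|^2$; multiplying by $\lambda_i^2\ge 0$ and summing over $i$ yields the pointwise algebraic inequality
\[\sum_i\lambda_i^4-nH\operatorname{tr}(A^3)\le\frac{3(1-\delta)}{n-1}|A|^4,\]
which is precisely the control required on the indefinite cubic term in Simons' formula.

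The heart of the proof combines Simons' identity $\tfrac12\Delta|A|^2 = |\nabla A|^2 + nH\operatorname{tr}(A^3) - |A|^4$, Kato's refined inequality $|\nabla A|^2 \ge (1 + \tfrac{2}{n-1})|\nabla|A||^2$, and the algebraic bound above. Multiplying Simons by $\phi^2$ with $\phi \in C^\infty_c(M\setminus K)$, integrating by parts, and absorbing the cross terms by AM-GM, one obtains an upper bound on $\int|\nabla|A||^2\phi^2$ in terms of $\int|A|^4\phi^2$ and $\int|A|^2|\nabla\phi|^2$. In parallel, the $\delta$-stability inequality applied to the test function $f=|A|\phi$ provides a lower bound for the same quantity. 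Matching these two should give
\[c(n,\delta)\int_M |A|^4\phi^2 \le C\int_M |A|^2|\nabla\phi|^2,\]
with $c(n,\delta)>0$ for every $n\ge 2$ and every $\delta\in[0,1)$. The constant $\tfrac{3(1-\delta)}{n-1}$ in the Ricci hypothesis should be calibrated precisely so that this positivity holds uniformly in dimension.

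With this integral inequality in hand, choose $\phi=\phi_R$, a standard cutoff equal to $1$ on $B_R\setminus K$, vanishing outside $B_{2R}$, with $|\nabla\phi_R|^2\le C/R^2$. Then
\[\int_{B_R\setminus K}|A|^4 \le \frac{C}{R^2}\int_{B_{2R}}|A|^2,\]
and a second application of $\delta$-stability to $\phi_R$ bounds $\int_{B_R\setminus K}|A|^2$ by $CR^{-2}\mathrm{Vol}(B_{2R})$. The Ricci lower bound, together with the $L^1$ control on $|A|^2$ just obtained, should produce at most polynomial volume growth via a Bishop--Gromov type argument in the spirit of Petersen--Wei for integrable Ricci curvature; letting $R\to\infty$ then forces $\int_{M\setminus K}|A|^4=0$, whence $|A|\equiv 0$ on $M\setminus K$. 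By the real-analyticity of CMC hypersurfaces, $|A|\equiv 0$ on all of $M$, so $M$ is totally geodesic, and being a complete connected $n$-submanifold of $\mathbb R^{n+1}$, $M$ is a hyperplane.

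The main obstacle I anticipate is verifying that the coefficient $c(n,\delta)$ in the central estimate is positive uniformly in $n\ge 2$ and $\delta\in[0,1)$: the Kato constant $\tfrac{2}{n-1}$, the Simons--Ricci algebraic bound with constant $\tfrac{3(1-\delta)}{n-1}$, and the AM-GM absorptions must balance delicately, and a first attempt with the naive test function $f=|A|\phi$ appears to require an additional input. Closing the numerics may require the sharper pointwise inequality $\sum_i\lambda_i^4\ge\tfrac{1}{n}|A|^4$ together with an optimised test function $f=|A|^q\phi$, where the exponent $q$ is tuned in terms of $n$ and $\delta$.
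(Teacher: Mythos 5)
Your overall strategy --- Simons' identity plus Kato plus the $\delta$-stability inequality with test function $|A|^q\phi$, leading to a Caccioppoli-type inequality and then a cutoff argument --- is the classical Schoen--Simon--Yau route, and the obstacle you flag at the end is not a technicality but the point where the argument breaks. With $f=|A|\phi$ the computation you outline produces, as coefficient of $\int|A|^4\phi^2$, the quantity
\[
\Bigl(1+\tfrac{2}{n-1}\Bigr)(1-\delta)\;-\;\Bigl(1-\tfrac1n+\tfrac{3(1-\delta)}{n-1}\Bigr)\;=\;(1-\delta)\,\frac{n-2}{n-1}-\frac{n-1}{n},
\]
which is at most $-\tfrac{1}{n(n-1)}<0$ for every $n\ge 2$ and every $\delta\in[0,1)$ (it is already negative at $\delta=0$, and decreasing in $\delta$), before one even pays for absorbing the cross terms. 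So the constant $c(n,\delta)$ you need is never positive: the hypothesis $\operatorname{Ric}_M\ge-\frac{3(1-\delta)}{n-1}|A|^2 g$ is simply not calibrated for the Simons-formula route, and tuning the exponent $q$ will not rescue a main term of the wrong sign. A second, independent gap is the volume-growth step: bounding $\int_{B_R}|A|^2$ by $R^{-2}\operatorname{Vol}(B_{2R})$ and then trying to extract polynomial volume growth from integral Ricci bounds is circular, and in any case Petersen--Wei requires $L^p$ control of the negative part of the Ricci curvature with $p>n/2$, whereas you only have $L^1$ control of $|A|^2$ over balls.

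The paper proves the theorem by an entirely different mechanism, and the constant $\frac{3(1-\delta)}{n-1}$ is calibrated for \emph{that} mechanism. Setting $V=(1-\delta)|A|^2$, the $\delta$-stability of $M\setminus K$ says exactly that $L_V=\Delta+V\ge 0$ there, and the Ricci hypothesis reads $\operatorname{Ric}_M\ge-\beta Vg$ with $\beta=\frac{3}{n-1}<\frac{4}{n-1}$. One then takes a positive supersolution $u$ of (a rescaled) $L_V$, performs the conformal change $\tilde g=u^{2k}g$, and runs the second variation of energy along a $\tilde g$-minimizing geodesic ray in $M\setminus K$ (Cheng's construction). The Ricci lower bound enters directly in the conformal-change formula for $\tilde R_{11}$, and the threshold $k<\frac{4}{n-1}$ is precisely what makes the coefficient $k\bigl[\frac{(n-1)k}{4}-1\bigr]$ negative so that the gradient terms can be discarded; this is why no dimensional restriction and no Simons/Kato numerics appear. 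The conclusion $V\equiv 0$ is then reached through criticality theory: every positive supersolution is shown to be a genuine solution along some ray, which rules out subcriticality of $L_V$ (a weighted spectral gap would produce a strict supersolution), forcing $L_V$ to be critical and hence $V\equiv 0$. If you want to keep a variational flavor, you should replace your Simons-based Caccioppoli inequality with this conformal/criticality argument; as written, your proof cannot be completed.
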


Notice that,  Theorem  \ref{docarmo-riccibounded-intro} is new also in the case of stable hypersurfaces ($\delta=0$). 

\

The paper is organized as follows. In Section 1, we introduce the necessary notions and recall some basic properties. In Section 2, we address Do Carmo’s question in the case of sub-exponential volume growth. In Section 3, after briefly introducing the criticality theory for operators of the form $\Delta+V, $ we prove Theorem \ref{docarmo-riccibounded-intro}. Finally, in Appendix A we establish some useful Caccioppoli-type inequalities for $\delta$-stable hypersurfaces.

\section{Definitions and Basic Properties}
\label{basics}

Throughout this article $M$ will be a complete, noncompact $n$-dimensional hypersurface with constant mean curvature $H$ ($H$-hypersurface) of a Riemannian manifold $N$, $\nu$ its unit normal vector and $A$ its second fundamental form. 

We start by recalling the notion of $\delta$-stability and $\delta$-index that are natural generalizations of the classical concepts of stability and index. 

We mention here some references about $\delta$-stable hypersurfaces:
\cite{ChZh, ChLi, Fu, HoLiWa, MePe, tinagliazhou}.

\begin{definition}
\label{def-deltastable}
For $\delta \in [0,1]$, we say that an $H$-hypersurface $M$ immersed in $N$ is \textit{$\delta$-stable} if for all $f \in C_0^\infty(M)$,
\[
Q_\delta (f, f)=\int_M |\nabla f|^2 - (1 - \delta)(|A|^2 + \operatorname{Ric}_N(\nu, \nu)) f^2 \geq 0.
\]
When $\delta = 0$, then $M$ is (classically) stable.
\end{definition}


The {\em $\delta$-stability operator} is defined as
\begin{equation*}
    J_\delta(f)= \Delta f + (1- \delta) (|A|^2+ \operatorname{Ric}_N(\nu, \nu))f.
\end{equation*}

\begin{definition}
\label{def-deltaindex}
The $\delta$-\textit{index} of a compact subset \( K \subset M \) is defined to be the number  ${i_\delta}$ of negative eigenvalues (counted with multiplicity) of the  operator \( J_\delta \).
\end{definition}
\noindent If \( K_1 \subset K_2 \), then
\[
\operatorname{i_\delta}(K_1 ) \leq \operatorname{i_\delta}(K_2),
\]
hence, we may define the $\delta$-\textit{index} of \( M \) as
\[
\operatorname{i_\delta}(M) := \lim_{R \to \infty} \operatorname{i_\delta}(B_{\sigma}^M(R)),
\]
where \( B_{\sigma}^M(R) \) is the geodesic ball of radius \( R \) centered at a fixed point $\sigma$ of the manifold $M$.

Let \( w \) be a positive non-decreasing function. The number 
\begin{equation*}
\mu_w := \limsup_{r \to \infty} \frac{\ln w(r)}{r}
\end{equation*}
 
 is called the {\em  logarithmic growth} of $w.$

We are  mainly interested in  the logarithmic growth of the volume  of a manifold. Denote by \( |B^M_\sigma(R)| \) the  volume of the geodesic ball \( B^M_\sigma(R) \).

\begin{definition}
The \textit{logarithmic volume growth} of \( M \)  is defined as
\begin{equation*}
\mu_M := \limsup_{R \to \infty} \frac{\ln |B^M_\sigma(R)|}{R}.
\end{equation*}
If $\mu_M=0$ we say that $M$ has {\em subexponential volume growth. }
\end{definition}

 The definition is justified by the fact that   \( \mu_M = 0 \) is equivalent to
\begin{equation*}
\limsup_{r \to \infty} \frac{|B^M_\sigma(R)|}{e^{\alpha R}} = 0, \quad \text{for all } \alpha > 0.
\end{equation*}

Notice that   having subexponential growth is a weaker assumption than being bounded by a polynomial of any degree.

The logarithmic volume growth  of a manifold $M$ is related to the bottom of the spectrum of the Laplace-Beltrami operator 
\( \Delta \) on  \( M \).

We recall that the \textit{bottom of the (essential) spectrum} \( \sigma(M) \) of \( -\Delta \) is defined as \begin{equation*}
\lambda_0(M) := \inf \{ \sigma(M) \} = \inf_{\substack {f \in C_0^\infty(M) \\  f \neq 0}} \frac{\int_M |\nabla f|^2}{\int_M f^2} 
\left(\lambda_0^{\text{ess}}(M) := \inf \{ \sigma_{\text{ess}}(M) \} = \sup_K \lambda_0(M \setminus K)\right)
\end{equation*}    

where the supremum is taken over all compact subsets \( K \subset M \).


The following theorem, due to  Brooks \cite{Br1}   gives a relation between the spectrum of the laplacian on  a manifold, and the logarithmic volume growth.
\begin{teorema}[\textbf{Brooks' Theorem}]
\label{brooks}
If \( M \) has infinite volume, then the following chain of inequalities holds:
\begin{equation*} \label{eq:brooks}
 \lambda_0(M) \leq \lambda_0(M \setminus K) \leq \lambda_0^{\mathrm{ess}}(M) \leq \frac{\mu_M^2}{4}.
\end{equation*}
\end{teorema}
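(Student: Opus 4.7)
The plan is to prove the three inequalities of the chain in turn, with the first two being essentially tautological. For $\lambda_0(M) \leq \lambda_0(M \setminus K)$, any $f \in C_0^\infty(M \setminus K) \setminus \{0\}$ extends by zero to a valid test function on $M$ with the same Rayleigh quotient, so the infimum over the smaller class is at least as large. The bound $\lambda_0(M \setminus K) \leq \lambda_0^{\mathrm{ess}}(M)$ is immediate from the definition $\lambda_0^{\mathrm{ess}}(M) := \sup_K \lambda_0(M \setminus K)$.

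The heart of the argument is $\lambda_0^{\mathrm{ess}}(M) \leq \mu_M^2/4$. Assuming $\mu_M < \infty$ (otherwise trivial), I fix $\alpha > \mu_M/2$ and aim to prove $\lambda_0^{\mathrm{ess}}(M) \leq \alpha^2$; sending $\alpha \searrow \mu_M/2$ closes the argument. The key preliminary is that $e^{-\alpha r(\cdot)} \in L^2(M)$, which follows from the coarea formula together with an integration by parts,
\[
\int_M e^{-2\alpha r}\, dV = 2\alpha \int_0^\infty e^{-2\alpha t}\, |B_\sigma^M(t)|\, dt,
\]
the right-hand side being finite as soon as $2\alpha > \mu_M + \epsilon$ via the volume bound $|B_\sigma^M(r)| \leq C_\epsilon e^{(\mu_M + \epsilon)r}$. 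For a compact $K \subset B_\sigma^M(R_0)$, I would build admissible test functions of the form
\[
\phi(x) = \chi(r(x))\, e^{-\alpha r(x)},
\]
where $\chi$ is a Lipschitz cutoff vanishing on $[0, R_0]$, equal to $1$ on $[R_0+1, R-1]$, and vanishing beyond $R$. Using $|\nabla r| \leq 1$ almost everywhere, the Dirichlet energy is dominated by $\int (\chi' - \alpha\chi)^2 e^{-2\alpha r}\, dV$, so the Rayleigh quotient of $\phi$ decomposes as $\alpha^2$ (the bulk contribution from the region where $\chi \equiv 1$) plus error terms from the two transition annuli $\{R_0 \leq r \leq R_0+1\}$ and $\{R-1 \leq r \leq R\}$.

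The outer transition error vanishes as $R \to \infty$ since the tail $\int_{\{r \geq R\}} e^{-2\alpha r}\, dV$ tends to zero by integrability. The main obstacle is the inner transition error, which is controlled by a constant multiple of $I_1(\alpha)/I_2(\alpha)$ with $I_1(\alpha) = \int_{\{R_0 \leq r \leq R_0+1\}} e^{-2\alpha r}\, dV$ and $I_2(\alpha) = \int_{\{r \geq R_0+1\}} e^{-2\alpha r}\, dV$. A second integration by parts shows that as $\alpha \searrow \mu_M/2$ one has $I_2(\alpha) \sim 2\alpha \int_{R_0+1}^\infty e^{-2\alpha t}|B_\sigma^M(t)|\, dt$, and the infinite-volume hypothesis $|B_\sigma^M(t)| \to \infty$ combined with the critical exponent $2\alpha - \mu_M \to 0^+$ forces this integral to diverge, while $I_1(\alpha)$ stays bounded. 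Hence $I_1/I_2 \to 0$, and taking the limits $R \to \infty$ and then $\alpha \searrow \mu_M/2$ gives $\lambda_0(M \setminus K) \leq \mu_M^2/4$. Passing to the supremum over $K$ yields the essential spectrum bound.
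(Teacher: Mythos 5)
The paper offers no proof of this statement---it is quoted from Brooks \cite{Br1}---so there is nothing internal to compare against; I judge your argument on its own terms. The first two inequalities and the overall architecture of the third (test functions $\chi(r)e^{-\alpha r}$, coarea formula, the observation that the outer cutoff error is the tail of a convergent integral) are fine. The genuine gap sits at the one point where the infinite-volume hypothesis must do real work: the claim that $I_2(\alpha)=\int_{\{r\ge R_0+1\}}e^{-2\alpha r}\,dV\to\infty$ as $\alpha\searrow\mu_M/2$. By monotone convergence this divergence is equivalent to $\int_M e^{-\mu_M r}\,dV=\infty$, and that can fail for an infinite-volume manifold with $\mu_M>0$: take a warped product $dr^2+f(r)^2\,d\theta^2$ with $f(r)=e^{r}/r^{2}$ for large $r$, so that $|B^M_\sigma(t)|\asymp e^{t}/t^{2}\to\infty$, the volume is infinite, $\mu_M=1$, and yet $\int_M e^{-r}\,dV\asymp\int^{\infty}t^{-2}\,dt<\infty$. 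For such an $M$ and a fixed inner radius $R_0$, the ratio $I_1(\alpha)/I_2(\alpha)$ tends to a strictly positive constant as $\alpha\searrow\mu_M/2$, so your final estimate only gives $\lambda_0(M\setminus B^M_\sigma(R_0))\le\mu_M^2/4+C(R_0)$ with $C(R_0)>0$, which does not close the argument (note the theorem is sharp in this example: $\lambda_0^{\mathrm{ess}}=1/4=\mu_M^2/4$). The heuristic ``$|B^M_\sigma(t)|\to\infty$ plus critical exponent forces divergence'' is valid only when $\mu_M=0$.

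The gap is repairable but needs an extra idea. If $\int_M e^{-\mu_M r}\,dV=\infty$ (which covers $\mu_M=0$ with infinite volume), your proof goes through verbatim. If $\int_M e^{-\mu_M r}\,dV<\infty$, then necessarily $\mu_M>0$, and one should work at the critical exponent $\alpha=\mu_M/2$ (the weight is now integrable) and move the \emph{inner} cutoff out along a good subsequence: setting $b_k=\int_{\{k\le r<k+1\}}e^{-\mu_M r}\,dV$ and $T_k=\sum_{j>k}b_j$, the assumption $b_k/T_k\ge c>0$ for all large $k$ would force the tails $T_k$ to decay geometrically, whence $|B^M_\sigma(k)|\lesssim\max\{k,\,e^{(\mu_M-\ln(1+c))k}\}$, contradicting $\limsup_k \ln|B^M_\sigma(k)|/k=\mu_M>0$. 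Hence $\liminf_k b_k/T_k=0$, and along that subsequence of inner radii the inner transition error vanishes; the monotonicity $K_1\subset K_2\Rightarrow\lambda_0(M\setminus K_1)\le\lambda_0(M\setminus K_2)$ then transfers the bound $\le\mu_M^2/4$ to every compact $K$. This dichotomy (or an equivalent device) is where Brooks' original argument earns the theorem. Finally, note that the case your argument does cover, $\mu_M=0$, is exactly what is needed for Theorem \ref{docarmo-entropy-intro}; only Theorem \ref{docarmo-delta-entropy} and its other corollaries use the statement with $\mu_M$ possibly positive.
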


\

For later use, in the case of a submanifold, we  introduce a logarithmic growth related to the second fundamental form.

  For any $p>0$, the  \textit{Total \( p \)-Curvature  function of a hypersurface  \( M \)} of a Riemannian manifold is 
   \begin{equation*}
\mathcal{T}_p(R) := \int_{B_\sigma(R)} |\phi|^p,
\end{equation*}
   
where \( \phi=A-Hg \) is the traceless second fundamental form of $M.$
  
 Then, the \textit{logarithmic growth of the Total \( p \)-Curvature} is 
   
 \begin{equation*}
\mu_{\mathcal{T}_p} := \limsup_{R \to \infty} \frac{\ln \mathcal{T}_p(R)}{R}.
\end{equation*}

\section{Subexponential volume growth}

 The main result of  this section is the positive answer  to do Carmo's question  for finite $\delta$-index $H$-hypersurfaces with subexponential volume growth in ${\mathbb R}^{n+1},$ for any $n$. 
Furthermore, we  give a  positive answer to Do Carmo's question for $\delta$-stable $H$-hypersurfaces   (for a suitable range of $\delta$ and $H$)    in ${\mathbb R}^{n+1}$ and ${\mathbb H}^{n+1}$ with $n\leq 5,$ provided the total curvature has subexponential growth.\\ In the Euclidean setting, this conclusion holds without any additional assumption on the growth of the total curvature (see \cite{tinagliazhou} and the remark at the end of Section 4 in \cite{chenhongli}). Finally, we remark that in \cite{H} the author gives a positive answer to Do Carmo's question   
in ${\mathbb H}^4$ for  finite index $H$-hypersurfaces ($H>1$) with finite topology, using the $\mu$-bubble construction. For the hyperbolic setting, the present work also provides some results in higher dimensions (see Theorem \ref{do-carmo-totalcurvature}).

We start by proving a general fact that is a direct consequence of Brooks's inequality on the bottom of the  essential spectrum (see  \cite[Theorem 9]{entropies} for the stable case).

\begin{teorema}
\label{docarmo-delta-entropy}
     There is no complete, noncompact, finite $\delta$-index $H$-hypersurface \( M \) of dimension $n$ immersed in a manifold \( N \), provided the mean curvature  \( H \) satisfies:
\[
(1 - \delta )(nH^2 + \mathrm{Ric}_N(\nu, \nu)) \geq c> \frac{\mu_M^2}{4},
\]
for a positive  constant \( c \).
\end{teorema}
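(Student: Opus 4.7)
The plan is to combine the $\delta$-stability that holds outside a compact set (a consequence of the finite $\delta$-index hypothesis) with the pointwise estimate $|A|^2 \ge nH^2$. This will produce a positive lower bound for the bottom of the Dirichlet spectrum of $-\Delta$ outside a compact set, which will contradict Brooks' theorem.

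\medskip

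\textbf{Step 1: Stability outside a compact set.} Because $M$ has finite $\delta$-index, the min--max characterization of the eigenvalues of $J_\delta$ on exhausting geodesic balls provides a compact set $K \subset M$ such that
\[
Q_\delta(f,f) = \int_M |\nabla f|^2 - (1-\delta)\bigl(|A|^2 + \operatorname{Ric}_N(\nu,\nu)\bigr) f^2 \ge 0
\]
for every $f \in C_0^\infty(M \setminus K)$. I would simply cite (or verify in one line) that on the orthogonal complement of the negative eigenspace of $J_\delta$ the form $Q_\delta$ is nonnegative, and that this complement contains $C_0^\infty(M \setminus K)$ for $K$ large enough.

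\medskip

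\textbf{Step 2: Pointwise bound and lower bound on $\lambda_0(M\setminus K)$.} The decomposition $A = \phi + Hg$ gives $|A|^2 = |\phi|^2 + nH^2 \ge nH^2$. Substituting into the $\delta$-stability inequality and using the hypothesis $(1-\delta)(nH^2 + \operatorname{Ric}_N(\nu,\nu)) \ge c$,
\[
\int_M |\nabla f|^2 \ge (1-\delta)\int_M \bigl(nH^2 + \operatorname{Ric}_N(\nu,\nu)\bigr) f^2 \ge c \int_M f^2
\]
for every $f \in C_0^\infty(M \setminus K)$. By the variational definition of $\lambda_0$ this yields $\lambda_0(M \setminus K) \ge c$.

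\medskip

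\textbf{Step 3: Contradiction via Brooks.} Theorem~\ref{brooks} gives $\lambda_0(M \setminus K) \le \mu_M^2/4$, so $c \le \mu_M^2/4$, contradicting the strict inequality $c > \mu_M^2/4$ in the hypothesis. The only subtle point is that Brooks' theorem requires $M$ to have infinite volume; if $M$ has finite volume, then the hypotheses already force $H^2$ (and the ambient Ricci term) to be pointwise bounded below by a positive constant on a noncompact complete hypersurface, which is incompatible with $\mu_M = 0$ in the typical settings of interest, so this case can be dispatched separately or ruled out at the outset.

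\medskip

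The main obstacle is really only Step 1 — namely, the rigorous passage from ``finite $\delta$-index'' to ``$\delta$-stability on the complement of a compact set.'' Once that standard fact is in hand, the rest is just the estimate $|A|^2 \ge nH^2$ together with Brooks' theorem.
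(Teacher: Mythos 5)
Your proposal is correct and follows essentially the same route as the paper: finite $\delta$-index gives $\delta$-stability on $M\setminus K$, the bound $|A|^2=|\phi|^2+nH^2\ge nH^2$ turns the stability inequality into $\lambda_0(M\setminus K)\ge c$, and Brooks' theorem yields the contradiction. Your extra remark about the infinite-volume hypothesis in Brooks' theorem is a point the paper's proof passes over silently, and is worth keeping.
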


\begin{proof} 
Assume such a hypersurface \( M \) exists. Since \( M \) has finite $\delta$-index, there exists a compact set \( K \subset M \) such that \( M \setminus K \) is $\delta$- stable.\\Therefore, for any \( f \in C^\infty_0(M \setminus K) \), one has:
\[
0 \leq Q_\delta(f, f) = \int_{M \setminus K} \left( |\nabla f|^2 - (1 - \delta )(|A|^2 + \mathrm{Ric}_N(\nu, \nu)) f^2 \right).
\]
\noindent But since
\[
(1- \delta)(|A|^2 + \mathrm{Ric}_N(\nu, \nu) )\geq (1- \delta) (nH^2 + \mathrm{Ric}_N(\nu, \nu)) \geq c,
\]
we obtain
\begin{equation*} \label{eq:stability}
0 \leq \int_{M \setminus K} \left( |\nabla f|^2 - c f^2 \right),
\end{equation*}
which implies
\[
\lambda_0(M \setminus K) \geq c > \frac{\mu_M^2}{4}.
\]
\noindent This contradicts Theorem \ref{brooks}.
\end{proof}

As a consequence of Theorem \ref{docarmo-delta-entropy} we get the following non existence result.

 \begin{teorema}
 \label{docarmo1}
   There are no complete non-compact finite $\delta$-index   $H$-hypersurfaces with subexponential volume growth, immersed in a 
   space form $N$, provided either  $N={\mathbb S}^{n+1},$  or  $N={\mathbb R}^{n+1}$ and $H\not=0$, or $N={\mathbb H}^{n+1}$ and $H>1.$
  \end{teorema}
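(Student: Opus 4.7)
The plan is to obtain this statement as a direct specialization of Theorem \ref{docarmo-delta-entropy}. The key observation is that subexponential volume growth is exactly the condition $\mu_M = 0$, so the inequality $c > \mu_M^2/4$ in the hypothesis of Theorem \ref{docarmo-delta-entropy} collapses to the requirement that $c$ be any positive constant. Consequently, it is enough to verify, case by case, that $(1-\delta)(nH^2 + \mathrm{Ric}_N(\nu,\nu))$ is strictly positive (and $\delta \in [0,1)$, which is implicit since $\delta = 1$ makes the index hypothesis vacuous).

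First I would compute $\mathrm{Ric}_N(\nu,\nu)$ in each space form from the sectional curvature: $\mathrm{Ric}_N(\nu,\nu) = n$ when $N = {\mathbb S}^{n+1}$, $\mathrm{Ric}_N(\nu,\nu) = 0$ when $N = {\mathbb R}^{n+1}$, and $\mathrm{Ric}_N(\nu,\nu) = -n$ when $N = {\mathbb H}^{n+1}$. Then I would substitute each value into the combination $(1-\delta)(nH^2 + \mathrm{Ric}_N(\nu,\nu))$ to get, respectively, $(1-\delta)n(H^2 + 1)$, $(1-\delta)nH^2$, and $(1-\delta)n(H^2 - 1)$.

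Finally, I would check positivity: in ${\mathbb S}^{n+1}$ the expression is positive for every $H$; in ${\mathbb R}^{n+1}$ the hypothesis $H \neq 0$ guarantees positivity; in ${\mathbb H}^{n+1}$ the hypothesis $H > 1$ makes $H^2 - 1 > 0$. In each of the three scenarios one can therefore choose $c$ equal to (or any positive number below) the displayed expression, and Theorem \ref{docarmo-delta-entropy} produces the desired contradiction.

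Since this is essentially a substitution argument, there is no genuine obstacle; the only point requiring a little care is recording the sign thresholds so that each of the three cases fits into the single framework provided by Theorem \ref{docarmo-delta-entropy}.
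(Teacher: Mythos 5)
Your proposal is correct and is exactly the paper's (implicit) argument: the paper derives Theorem \ref{docarmo1} directly from Theorem \ref{docarmo-delta-entropy} by noting that subexponential volume growth means $\mu_M=0$ and that $\mathrm{Ric}_N(\nu,\nu)$ equals $n$, $0$, $-n$ in ${\mathbb S}^{n+1}$, ${\mathbb R}^{n+1}$, ${\mathbb H}^{n+1}$ respectively, so the hypotheses on $H$ make $(1-\delta)(nH^2+\mathrm{Ric}_N(\nu,\nu))$ a positive constant. No discrepancy with the paper's route.
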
 

\begin{osservazione}
    Notice  that, in particular, Theorem \ref{docarmo1} implies a positive answer to Do Carmo's question in the Euclidean space, in the case of subexponential  volume growth, with  no restrictions on $\delta$ and on the dimension. 
\end{osservazione}

 It is worth noting that the volume growth condition on
M can be replaced by a volume growth condition on the ambient space, provided that
M is properly embedded and has bounded second fundamental form. Indeed, under these assumptions, one can construct an embedded half-tube around
M and compare the logarithmic volume growth of M with that of the ambient space (see Theorem 1 in \cite{entropies} for the construction of the half-tube and Corollary 4 therein for the comparison of logarithmic growths).
 
The next theorem is  a generalization of  \cite[Theorem 10]{entropies}.

 \begin{teorema}
 \label{properly-embedded}
     There is no complete, noncompact, $H$-hypersurface   \( M \) of dimension $n$ and with finite $\delta$-index,  that is properly embedded in a simply connected manifold \( N \), where \( M \) and \( N \) have bounded curvature, and provided the following conditions hold:
\[
H \geq c_1 > 0 \quad \text{and} \quad (1-\delta)(nH^2 + \operatorname{Ric}_N(\nu,\nu))\geq c_2 > \frac{\mu_N^2}{4},
\]
for some positive constants \( c_1, c_2 \).

 \end{teorema}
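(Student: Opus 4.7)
The plan is to reduce Theorem \ref{properly-embedded} to Theorem \ref{docarmo-delta-entropy} by establishing the volume-growth comparison $\mu_M \leq \mu_N$. Once this inequality is available, the hypothesis $(1-\delta)(nH^2 + \operatorname{Ric}_N(\nu,\nu)) \geq c_2 > \mu_N^2/4$ forces $c_2 > \mu_M^2/4$, and Theorem \ref{docarmo-delta-entropy} supplies the contradiction. In particular, no hypothesis on the intrinsic volume growth of $M$ will be needed directly: the ambient bound $c_2 > \mu_N^2/4$ is transferred to $M$ via the geometry of the embedding.

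To produce $\mu_M \leq \mu_N$, I would first fix the unit normal $\nu$ to $M$ so that $H \geq c_1 > 0$ with respect to it; this choice is globally consistent since $N$ is simply connected, whence $M$ is two-sided. The assumption $H \geq c_1 > 0$, together with the bound on $|A|$ and on the curvature of $N$, permits a Jacobi-field comparison along geodesics leaving $M$ in the direction $\nu$: no focal points of the normal exponential map occur within a uniform distance $\varepsilon > 0$ independent of the base point. Proper embeddedness of $M$, combined with the curvature bound on $N$, then upgrades this local injectivity to global injectivity, so that the half-tube
$$T_\varepsilon(M)\ :=\ \{\exp_p(t\nu(p)) : p \in M,\ t \in [0,\varepsilon]\}$$
is embedded in $N$. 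This is exactly the construction of \cite[Theorem 1]{entropies}, which uses only $|A|$, the curvature of $N$, proper embeddedness, and $H>0$, and therefore transfers unchanged to the $\delta$-index setting.

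A uniform two-sided Jacobian bound for the normal exponential map (depending only on $\varepsilon$, $\|A\|_\infty$ and the curvature of $N$) then gives, for every $R>0$,
$$ c\,|B^M_\sigma(R)|\ \leq\ |T_\varepsilon(B^M_\sigma(R))|\ \leq\ |B^N_\sigma(R+\varepsilon)|,$$
the lower bound using the Jacobian estimate and the embeddedness of $T_\varepsilon$, the upper bound using $T_\varepsilon(B^M_\sigma(R))\subset B^N_\sigma(R+\varepsilon)$. Passing to $\ln$, dividing by $R$, and taking $\limsup_{R\to\infty}$ yields $\mu_M \leq \mu_N$. This step is \cite[Corollary 4]{entropies}.

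The main technical obstacle, which is entirely geometric and already handled in \cite{entropies}, is the global injectivity of the normal exponential map on the mean-convex side of $M$: absence of focal points only gives local injectivity, and ruling out two short normal geodesics from distinct points of $M$ meeting at a common point of $N$ requires simultaneously proper embeddedness of $M$, the bound on $|A|$, and the curvature bound on $N$ (with simple connectedness of $N$ needed both for two-sidedness and to prevent the tube from wrapping). Once the embedded half-tube is in hand, the remainder is a Bishop-type Jacobian computation plus a direct application of Theorem \ref{docarmo-delta-entropy}.
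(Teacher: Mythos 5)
Your proposal is correct and follows essentially the same route as the paper: the paper's proof consists precisely of invoking \cite[Corollary 4]{entropies} to obtain $\mu_M \leq \mu_M^N \leq \mu_N$ and then applying Theorem \ref{docarmo-delta-entropy}. Your more detailed account of the half-tube construction and the Jacobian comparison simply unpacks what that corollary already provides, so there is nothing to add.
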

 \begin{proof}
     From Corollary 4 in \cite{entropies} we know 
     \begin{equation*}
         \mu_M \leq \mu_M^N \leq \mu_N
     \end{equation*}
and we can apply Theorem \ref{docarmo-delta-entropy}.
 \end{proof}

As a consequence one has the following positive answer to Do Carmo's question.

 \begin{teorema}
 \label{docarmo-properly-embedded}
   There are no complete non-compact finite $\delta$-index   $H$-hypersurfaces, $H\not=0,$ with bounded second fundamental form,  properly embedded in  ${\mathbb R}^{n+1}.$ 
  \end{teorema}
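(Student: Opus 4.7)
The plan is to reduce the statement to Theorem \ref{properly-embedded} applied to the ambient space $N=\mathbb{R}^{n+1}$. This is essentially a matter of verifying the four hypotheses of that theorem in the Euclidean setting.

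First, I would observe that $\mathbb{R}^{n+1}$ is simply connected and has bounded (indeed vanishing) sectional curvature, so one hypothesis is immediate. The second hypothesis, bounded curvature of $M$, follows from the Gauss equation: since $N$ is flat, the sectional curvatures of $M$ are controlled purely by $|A|^2$, which is bounded by assumption. Next, I would compute the logarithmic volume growth of the ambient space: since $|B_\sigma^N(R)|=\omega_{n+1}R^{n+1}$,
\[
\mu_N=\limsup_{R\to\infty}\frac{\ln(\omega_{n+1}R^{n+1})}{R}=0,
\]
so in particular $\mu_N^2/4=0$.

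It then remains to check the two quantitative conditions on the mean curvature. Up to reversing the orientation of $M$, we may assume $H>0$, so we may take $c_1:=H>0$. Since $\operatorname{Ric}_N\equiv 0$ in the Euclidean case, the second inequality becomes
\[
(1-\delta)(nH^2+\operatorname{Ric}_N(\nu,\nu))=(1-\delta)\,nH^2,
\]
which, for $\delta\in[0,1)$, is a strictly positive constant and thus exceeds $\mu_N^2/4=0$. Taking $c_2:=(1-\delta)nH^2/2$ does the job. (The degenerate case $\delta=1$ needs no argument since every hypersurface is $1$-stable, but the formulation of the result implicitly requires $\delta<1$ for nontriviality.)

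With all four hypotheses of Theorem \ref{properly-embedded} verified, its conclusion gives a contradiction, proving the nonexistence. I do not anticipate any genuine obstacle here: the only subtlety is noting that the Euclidean $\mu_N=0$ leaves essentially no room in the inequality $c_2>\mu_N^2/4$, but it is comfortably fulfilled because $H\ne 0$ and $\delta<1$ make the left-hand side strictly positive.
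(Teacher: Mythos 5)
Your proposal is correct and follows exactly the paper's route: the paper's proof is the one-line application of Theorem \ref{properly-embedded} with $\mu_{\mathbb{R}^{n+1}}=0$, and you have simply spelled out the verification of its hypotheses (simple connectedness, bounded curvature of $M$ via the Gauss equation, and the choice of $c_1$, $c_2$). Your parenthetical caveat about $\delta=1$ is a fair observation, but it is also left implicit in the paper.
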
 
\begin{proof}
One applies Theorem \ref{properly-embedded} with  $\mu_{{\mathbb R}^{n+1}}=0.$
\end{proof}

In the following we are going to relate the logarithmic  volume growth  with the  logarithmic growth of the total curvature.

First we generalize   \cite[Theorem 7]{entropies} to finite $\delta$-index $H$-hypersurfaces.

\begin{teorema}
\label{entropiatotale}
Let \( M \) be a complete, noncompact,   $H$-hypersurface of dimension $n$ with finite $\delta$-index immersed in \( N \), with $\delta < \frac{2}{n+2}$.\\ Assume \( q \in \left [0, -\delta + \sqrt{\delta^2 -\delta + \frac{2}{n}(1-\delta)} \right )\), then one has
\[
\mu _{\mathcal{T}_{2q+4}} \leq \mu_M.
\]
\end{teorema}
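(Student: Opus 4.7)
The plan is to exploit the finite $\delta$-index hypothesis to derive, on the $\delta$-stable end $M \setminus K$, a Caccioppoli-type inequality
$$\int_{M \setminus K} |\phi|^{2q+4}\eta^2 \leq C \int_{M \setminus K} |\phi|^{2q+2}|\nabla \eta|^2,$$
and then, via Hölder's inequality and a suitable cutoff $\eta$, to extract the desired entropy comparison.

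For the Caccioppoli inequality I would fix a compact $K \subset M$ such that $M \setminus K$ is $\delta$-stable, test $Q_\delta(f,f) \geq 0$ with $f = |\phi|^{q+1}\eta$ for $\eta \in C_0^\infty(M \setminus K)$, and combine the resulting inequality with the CMC Simons identity applied to $|\phi|\Delta|\phi|$ and with the refined Kato-type bound $|\nabla A|^2 \geq \left(1 + \tfrac{2}{n}\right)|\nabla|\phi||^2$. Absorbing the mixed gradient terms via Young's inequality, the coefficient of $\int |\phi|^{2q+4}\eta^2$ on the left-hand side becomes a quadratic in $q$ and $\delta$ whose strict positivity is equivalent to
$$0 \leq q < -\delta + \sqrt{\delta^2 - \delta + \tfrac{2}{n}(1-\delta)},$$
and this interval is nonempty precisely when $\delta < \tfrac{2}{n+2}$. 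This is the Caccioppoli-type inequality for $\delta$-stable hypersurfaces proved in Appendix A, and the coefficient balancing is the delicate heart of the argument.

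To decouple the two sides, I would apply Hölder with conjugate exponents $\tfrac{q+2}{q+1}$ and $q+2$,
$$\int |\phi|^{2q+2}|\nabla \eta|^2 \leq \left(\int |\phi|^{2q+4}\eta^2\right)^{\frac{q+1}{q+2}} \left(\int |\nabla \eta|^{2(q+2)}\right)^{\frac{1}{q+2}},$$
which after reinsertion into the Caccioppoli inequality and absorption yields
$$\int_{M \setminus K} |\phi|^{2q+4}\eta^2 \leq C(n, q, \delta) \int_{M \setminus K} |\nabla \eta|^{2(q+2)}.$$

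Finally, choose $R_0$ so that $K \subset B_\sigma^M(R_0)$; for $R > R_0 + 1$ take a Lipschitz cutoff $\eta$ with $\eta \equiv 1$ on $B_\sigma^M(R) \setminus B_\sigma^M(R_0+1)$, $\eta \equiv 0$ outside $B_\sigma^M(R+1)$ and on $B_\sigma^M(R_0)$, and $|\nabla \eta| \leq 2$ a.e. Then
$$\int_{B_\sigma^M(R)} |\phi|^{2q+4} \leq C\,|B_\sigma^M(R+1)| + \int_K |\phi|^{2q+4},$$
and taking logarithms, dividing by $R$ and passing to $\limsup$ gives $\mu_{\mathcal{T}_{2q+4}} \leq \mu_M$. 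The main obstacle is the coefficient tracking in the Caccioppoli step: Simons, Kato and $\delta$-stability each contribute competing multiples of $\int |\phi|^{2q+4}\eta^2$ and $\int |\phi|^{2q}|\nabla|\phi||^2\eta^2$, and the Young's inequality absorption must be tuned so that both remaining coefficients are strictly positive, which is exactly what forces the threshold $\delta < \tfrac{2}{n+2}$ and the stated upper bound on $q$.
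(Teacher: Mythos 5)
Your overall strategy---a Caccioppoli-type inequality on the $\delta$-stable end $M\setminus K$, a cutoff supported in an annulus, then logarithms and a $\limsup$---is exactly the paper's, which simply invokes Theorem \ref{beta123} from Appendix A and concludes in three lines. However, two steps in your execution fail as written. First, the clean inequality $\int|\phi|^{2q+4}\eta^2\le C\int|\phi|^{2q+2}|\nabla\eta|^2$ is not available for a \emph{general} ambient manifold $N$, which is the setting of the statement: the Simons-type inequality \eqref{simona1a2a3} carries terms coming from the ambient curvature and its covariant derivative (the coefficients $B$, $E_\delta$, $F$, $G$ of Theorem \ref{abcdefg}), contributing powers $|\phi|^{2q+3},\dots,|\phi|^{2q}$ with uncontrolled signs. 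After Young's inequality these can only be absorbed into the $|\phi|^{2q+4}$ term at the cost of an additive zeroth-order term $+C\int\eta^{2q+4}$ on the right; an inequality without that term is what the paper proves only under a constant-curvature hypothesis (Theorem \ref{gammadth}), while the general-$N$ statement is necessarily of the form \eqref{beta123eq} with the extra $\beta_3\int f^{2q+4}$. Here this term is harmless, being again bounded by $|B^M_\sigma((1+t)R)|$, but it cannot be dropped.

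Second, your H\"older step is misapplied. Writing $|\phi|^{2q+2}|\nabla\eta|^2=\bigl(|\phi|^{2q+4}\eta^2\bigr)^{\frac{q+1}{q+2}}\cdot v$ forces $v=\eta^{-\frac{2(q+1)}{q+2}}|\nabla\eta|^2$, so the second factor is $\bigl(\int \eta^{-2(q+1)}|\nabla\eta|^{2q+4}\bigr)^{\frac{1}{q+2}}$, which is not controlled by $\bigl(\int|\nabla\eta|^{2q+4}\bigr)^{\frac{1}{q+2}}$ since $\eta\le 1$ on the support of $\nabla\eta$; without the weight $\eta^2$ in the first factor the absorption into the left-hand side does not go through. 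The standard remedy---and what Theorem \ref{beta123} encodes---is to run the whole argument with the test function $\eta^{q+2}$, so that every term carries the weight $\eta^{2q+4}$ and the gradient term $|\phi|^{2q+2}\eta^{2q+2}|\nabla\eta|^2$ splits by Young's inequality as $\varepsilon|\phi|^{2q+4}\eta^{2q+4}+C_\varepsilon|\nabla\eta|^{2q+4}$. Your identification of the coefficient condition (positivity of $A_\delta$, equivalent to $0\le q<-\delta+\sqrt{\delta^2-\delta+\tfrac{2}{n}(1-\delta)}$, nonempty iff $\delta<\tfrac{2}{n+2}$) is correct, and with the two corrections above your cutoff-and-logarithm endgame coincides with the paper's proof and yields $\mu_{\mathcal{T}_{2q+4}}\le\mu_M$.
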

\begin{proof} Let $K$ be a compact set such that $M\setminus K$ is $\delta$-stable. We apply Theorem \ref{beta123} with  the following test function \( f \):
\[
f = 1 \text{ in } B_{\sigma}^M(R), \quad f = 0 \text{ in } M \setminus B_{\sigma}^M((1 + t)R),
\]
and linear in \( B_{\sigma}^M((1 + t)R) \setminus B_{\sigma}^M(R) \) for a fixed point  $\sigma\in M.$

\noindent With such choices, assuming $R$ large enough such that $K\subset B^M_\sigma(R),$  the left hand side of equation \eqref{beta123eq} is larger than
\[
\beta_1 \int_{B_\sigma^M(R)\setminus K} |\phi|^{2q+4},
\]
while the right hand side is smaller than
\[
(\beta_2 + \beta_3) |B_\sigma^M((1 + t)R)|.
\]
\noindent Then,
\begin{equation*} 
\frac{\beta_1 \int_{B_\sigma^M(R\setminus K)} |\phi|^{2q+4}}{R} \leq (\beta_2 + \beta_3) \frac{|B_\sigma^M((1 + t)R)|}{R}.
\end{equation*}

We take the logarithm and the limit of both sides, first as \( R \to \infty \), then as \( t \to 0 \), and the result follows.
\end{proof}

Next result is a generalization of \cite[Theorem 8]{entropies} to finite $\delta$-index $H$-hypersurfaces.

\begin{teorema}
\label{tracelessfinita}
   Let \( M \) be a complete, non-compact $H$-hypersurface ($H\not=0$) with  finite $\delta$-index and dimension $n$,  immersed  in a  manifold  \( N \) with constant sectional curvature \( c \). Assume moreover that  $n\leq 5$ and $\delta$, $n$ satisfy the following inequality
   \begin{equation*}
       \delta < 1- \frac{n^2}{2 (n+2)\sqrt{n-1}}.
   \end{equation*}Then, provided either:

\begin{enumerate}
    \item \( c = 0 \) or \( c = 1 \), with \( q \in [0, q_2) \), or
    \item \( c = -1 \), \( \varepsilon > 0 \), \( q \in [1, q_2 - \varepsilon] \), and \( H^2 \geq g_n(q) \),
\end{enumerate}
where $g_n$ and $\alpha_2, q_2$  are  defined in \eqref{eq:gn},\eqref{eq:alpha}and \eqref{eq:x1x2},
one has
\begin{equation*} \label{eq:entropy_condition}
\int_M |\phi|^{2q+2} < \infty \quad \text{if and only if} \quad \mu_{\mathcal T_{2q+2}} \equiv 0,
\end{equation*}
where \( \phi = A - H g \) is the traceless part of the second fundamental form. 
\end{teorema}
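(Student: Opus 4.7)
The forward implication, $\int_M |\phi|^{2q+2}<\infty \Rightarrow \mu_{\mathcal T_{2q+2}}\equiv 0$, is immediate from the definition of logarithmic growth, so the content is in the converse.

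The plan is to derive a weighted Caccioppoli-type inequality of the form
\[
\int_M f^2\,|\phi|^{2q+2} \;\leq\; C\int_M |\nabla f|^2\,|\phi|^{2q+2}, \qquad \forall\, f\in C_0^\infty(M\setminus K),
\]
where $K\subset M$ is a compact set (existing by the finite $\delta$-index hypothesis) such that $M\setminus K$ is $\delta$-stable, and the constant $C=C(\delta,n,q,c,H)$. To obtain this inequality I would test the $\delta$-stability inequality against $\psi=f\cdot|\phi|^{q+1}$, expand $|\nabla\psi|^2$, and absorb the resulting $\int f^2|\phi|^{2q}|\nabla|\phi||^2$ term via Simons' identity for $|\phi|$, which in a space form of constant sectional curvature $c$ reduces to a clean form depending only on $|\phi|$, $H$, and $c$, together with the refined Kato inequality for $H$-hypersurfaces. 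The admissible ranges for $q$ (namely $q\in[0,q_2)$ in case (1) and $q\in[1,q_2-\varepsilon]$ in case (2)), together with the upper bound on $\delta$, are precisely those under which the Simons--Kato interpolation closes with a strictly positive coefficient in front of $\int f^2|\phi|^{2q+2}$; in the hyperbolic case $c=-1$, the negative contribution $nc=-n$ coming from $\operatorname{Ric}_N(\nu,\nu)$ must be absorbed by $nH^2$, which accounts for the threshold $H^2\geq g_n(q)$. The dimensional restriction $n\leq 5$ is the range in which the refined Kato inequality produces the gain needed to close the algebra.

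Once the Caccioppoli inequality is in place, the conclusion follows from a standard dichotomy. Plug in the unit-width radial cutoff $f\equiv 1$ on $B_\sigma^M(R)$, $f\equiv 0$ outside $B_\sigma^M(R+1)$, $|\nabla f|\leq 2$, and set $T(R):=\int_{B_\sigma^M(R)\setminus K}|\phi|^{2q+2}$ for all $R$ large enough that $K\subset B_\sigma^M(R)$. The inequality becomes
\[
T(R)\;\leq\; 4C\,\bigl(T(R+1)-T(R)\bigr),
\qquad\text{equivalently,}\qquad
T(R+1)\;\geq\;\bigl(1+(4C)^{-1}\bigr)\,T(R).
\]
If $T(R_0)>0$ for some $R_0$, iteration on integer radii yields $T(R_0+k)\geq(1+(4C)^{-1})^k T(R_0)$, hence
\[
\mu_{\mathcal T_{2q+2}}\;\geq\;\limsup_{k\to\infty}\frac{\ln T(R_0+k)}{R_0+k}\;=\;\ln\bigl(1+(4C)^{-1}\bigr)\;>\;0,
\]
contradicting the hypothesis $\mu_{\mathcal T_{2q+2}}\equiv 0$. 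Therefore $T(R)\equiv 0$ for all such $R$, which in particular gives $\int_M|\phi|^{2q+2}<\infty$.

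The main obstacle will be the algebraic derivation of the Caccioppoli inequality with a positive leading constant: all the numerical subtlety in the statement (the thresholds $q_2$ and $\alpha_2$, the bound on $\delta$, the dimensional restriction $n\leq 5$, and the lower bound $H^2\geq g_n(q)$) is encoded in the Simons--Kato interpolation needed to close it. Once that inequality is available, the iterative dichotomy above is essentially automatic.
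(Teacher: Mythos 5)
Your proposal follows essentially the same route as the paper: the easy direction is the one you identify (the paper's prose actually states the two implications swapped, but its argument agrees with yours), and the hard direction $\mu_{\mathcal T_{2q+2}}=0\Rightarrow\int_M|\phi|^{2q+2}<\infty$ is obtained from exactly the weighted Caccioppoli inequality you describe --- this is inequality \eqref{gammad} of Theorem \ref{gammadth} in the Appendix, proved by testing $\delta$-stability with $f|\phi|^{q+1}$ and closing via the Simons/Kato interpolation whose thresholds produce $q_2$, the bound on $\delta$, $n\le 5$ and $H^2\ge g_n(q)$ --- followed by the same exponential-growth dichotomy, which the paper outsources to \cite[Theorem 3(1)]{entropies} with $w=|\phi|^{2q+2}$. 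One small repair: your cutoff with $f\equiv 1$ on $B_\sigma^M(R)$ is not admissible in $C_0^\infty(M\setminus K)$, so $f$ must additionally vanish on a neighborhood of $K$; this contributes a fixed additive constant to the right-hand side, and the dichotomy then yields that $T(R)$ is bounded (not $T\equiv 0$, which would wrongly force $\phi\equiv 0$ off $K$), which still gives the desired finiteness.
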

\begin{proof}
Notice that the condition $n\leq 5$ is needed to guarantee that $q_2>0$ as specified in the Appendix.
The implication \( \mu_{\mathcal T_{2q+2}} \equiv 0 \Rightarrow \int_M |\phi|^{2q+2} < \infty \) follows directly from the definition of logarithmic growth.
\\Conversely, since inequality \eqref{gammad} holds on \( M \), we can apply part (1) of  \cite[Theorem 3]{entropies} with \( w = |\phi|^{2q+2} \) to conclude the result.

\end{proof}

The next result is a positive answer to  Do Carmo's question
for $H$-hypersurfaces with finite $\delta$-index   in ${\mathbb R}^{n+1}$ and ${\mathbb H}^{n+1}$, provided a growth assumption on the total curvature is satisfied (see  \cite[Theorem 11]{entropies} for the stable case). 

 \begin{teorema}\label{do-carmo-totalcurvature}
There is no complete, noncompact, $\delta$-stable, $H$-hypersurface \( M\) of dimension $n\le 5$, immersed in a manifold \( N \) with \( \mu_{\mathcal T_{2q+2}} = 0 \), provided 
\begin{equation*}
    \delta< 1- \frac{n^2}{2 (n+2)\sqrt{n-1}}
\end{equation*}
and either:
\begin{enumerate}
    \item \( N = \mathbb{R}^{n+1} \), \( q \in [0, q_2) \), \( H > 0 \), or
    \item \( N = \mathbb{H}^{n+1} \), \( \varepsilon > 0 \), \( q \in [0, \operatorname{min}\{\alpha_2, q_2\} - \varepsilon] \), \( H^2 > g_n(q) \)
    where $g_n$ and $\alpha_2, q_2$ are defined in\eqref{eq:gn},\eqref{eq:alpha}and \eqref{eq:x1x2}.
\end{enumerate}
\end{teorema}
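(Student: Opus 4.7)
The strategy follows the template of \cite[Theorem 11]{entropies}, with the $\delta$-stability of $M$ playing the role of classical stability. First, since $\delta$-stability entails $i_\delta(M)=0$, in particular finite $\delta$-index, the assumption $\mu_{\mathcal T_{2q+2}}=0$ combined with Theorem \ref{tracelessfinita} (applied with $c=0$ in case (1) and with $c=-1$ in case (2), after matching the parameter ranges) yields
\[
\int_M |\phi|^{2q+2}<\infty.
\]

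The core of the argument is to promote this finite total-curvature bound to pointwise vanishing of $\phi$. Because $M$ is globally $\delta$-stable (rather than merely of finite $\delta$-index), the Caccioppoli-type inequality of Appendix A (Theorem \ref{beta123} with $K=\emptyset$) is valid for \emph{every} $f\in C_0^\infty(M)$ and schematically reads
\[
\beta_1\int_M |\phi|^{2q+4}\, f^2 \;\leq\; \beta_2\int_M |\phi|^{2q+2}|\nabla f|^2 \;+\; \beta_3\int_M |\phi|^{2q+2}\, f^2,
\]
where $\beta_3$ gathers the ambient Ricci and $H^2$ contributions. I would test with the standard cutoff $f_R=1$ on $B^M_\sigma(R)$, $f_R=0$ outside $B^M_\sigma(2R)$, and affine in between (so $|\nabla f_R|^2\leq 1/R^2$). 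The first right-hand term is then bounded by $\beta_2 R^{-2}\int_M|\phi|^{2q+2}$, which tends to $0$ as $R\to\infty$ by the integrability just established. In the Euclidean case $\beta_3$ vanishes; in the hyperbolic case the strict inequality $H^2>g_n(q)$, together with the $\varepsilon$-margin on $q$, is exactly what is needed to rearrange the $\beta_3$-term and absorb it into the left-hand side. Letting $R\to\infty$ gives $\int_M |\phi|^{2q+4}=0$, hence $\phi\equiv 0$ and $M$ is totally umbilical.

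To conclude, I would invoke the classical classification of complete totally umbilical hypersurfaces in the ambient space form. In $\mathbb R^{n+1}$ with $H>0$, the only option is a round sphere; in $\mathbb H^{n+1}$, the threshold $H^2>g_n(q)$ is tuned to exclude totally geodesic hyperplanes, equidistants and horospheres, leaving only geodesic spheres. In either case $M$ would be compact, contradicting the noncompactness hypothesis.

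The main obstacle is the Caccioppoli absorption in the hyperbolic setting: the negative contribution from $\operatorname{Ric}_{\mathbb H^{n+1}}(\nu,\nu)=-n$ must be strictly beaten by the mean-curvature term so that a positive multiple of $|\phi|^{2q+4}$ survives on the left after rearrangement. This is precisely where the strict bound $H^2>g_n(q)$ and the $\varepsilon$-buffer on $q$ come into play, and their coherence with the constants $\beta_i$ produced by Simons' identity and the refined Kato inequality used in the appendix is the delicate point to verify.
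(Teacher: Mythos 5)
Your overall strategy coincides with the paper's: (i) use Theorem \ref{tracelessfinita} to upgrade $\mu_{\mathcal T_{2q+2}}=0$ to $\int_M|\phi|^{2q+2}<\infty$, (ii) feed a Caccioppoli-type inequality with the standard cutoff to force $\phi\equiv 0$, (iii) classify complete totally umbilic hypersurfaces and contradict noncompactness. However, the middle step as you have written it has a concrete problem. The lemma you invoke, Theorem \ref{beta123}, does not say what you quote: its right-hand side is $\beta_2\int|\nabla f|^{2q+4}+\beta_3\int f^{2q+4}$, with \emph{no} factor of $|\phi|^{2q+2}$, so with the cutoff $f_R$ it is of the order of $|B^M_\sigma(2R)|$ and blows up with the volume. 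That inequality can only produce the growth comparison $\mu_{\mathcal T_{2q+4}}\le\mu_M$ of Theorem \ref{entropiatotale}; it can never give $\phi\equiv 0$. The inequality you actually write down is (a rearrangement of) Theorem \ref{abcdefg}, and the correct tool here is its consequence Theorem \ref{gammadth}, which is what the paper uses: $\gamma\int f^2|\phi|^{2q+2}\le D\int|\phi|^{2q+2}|\nabla f|^2$, whose right-hand side is $O(R^{-2})\int_M|\phi|^{2q+2}\to 0$.

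The second issue is your claim that ``in the Euclidean case $\beta_3$ vanishes.'' It does not. In $\mathbb R^{n+1}$ the quantity to be controlled is the full quadratic $A_\delta|\phi|^2+BH|\phi|+C_\delta H^2$ with $B=-a_1(q+1)(q+1+\tilde\varepsilon)<0$ for $n\ge 3$; the negative cross term $BH|\phi|$ cannot be lumped into a constant $\beta_3$ and must be dominated by $A_\delta|\phi|^2+C_\delta H^2$ via the discriminant condition $B^2-4A_\delta C_\delta<0$. This is exactly where the hypotheses $q<q_2$ and $\delta<1-\tfrac{n^2}{2(n+2)\sqrt{n-1}}$ enter, \emph{in the Euclidean case as well as the hyperbolic one} (note that case (1) of Theorem \ref{gammadth} still requires $q\in[0,q_2)$). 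Without this positivity, your absorption only yields $\int_M|\phi|^{2q+4}\le C\int_M|\phi|^{2q+2}<\infty$, i.e.\ finiteness rather than vanishing, and the proof does not close. Once the quadratic positivity is established, the conclusion $\phi\equiv0$ and the umbilic classification go through exactly as you describe.
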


\begin{proof}

First, notice that by Theorem \ref{tracelessfinita}, the hypothesis \( \mu_{\mathcal T_{2q+2}} = 0 \) implies \( \int_M |\phi|^{2q+2} < \infty \).
\\One then applies Theorem \ref{gammadth} with  test function $f=1$ on $B_{\sigma}^M(R)$, $f=0$ on $B_{\sigma}^M(2R)\setminus B_{\sigma}^M(R)$ and $|\nabla f| \leq \frac{1}{R}$ to deduce that \( M \) is totally umbilic.
\\ \textbf{Case (1):} Since \( N = \mathbb{R}^{n+1} \), it follows that \( M \) is contained either in a sphere or in a hyperplane. As \( M \) is complete and noncompact, it must be contained in a hyperplane, which implies \( H = 0 \), contradicting the hypothesis \( H > 0 \).
\\ \textbf{Case (2):} In the case where \( N = \mathbb{H}^{n+1} \), the totally umbilic hypersurfaces are either geodesic spheres, horospheres, or equidistant hypersurfaces. The condition \( H^2 > g_n(q) \geq 1 \) implies that \( M \) can only be contained in a geodesic sphere. However, as \( M \) is complete and noncompact, this leads again to a contradiction.\\ Hence, in both cases, such an \( M \) cannot exist.

\end{proof}

\section{Ricci curvature bounded below}
 In this section, we prove that finite $\delta$-index $H$-hypersurfaces in $\mathbb{R}^{n+1}$ satisfying a suitable  Ricci curvature bound are hyperplanes. 
This result not only gives a positive answer to do Carmo’s question, but also yields a stronger rigidity result.
 More precisely we will require that 

\begin{equation*}
\mathrm{Ric}_M \geq
- \frac{3(1-\delta)}{n-1} |A|^2 g.
\end{equation*}

As already mentioned in the Introduction,
the previous  condition  is quite natural since,   for any  hypersurface in a manifold with non negative sectional curvature, the  weaker condition

\begin{equation}\label{ineq-ricci}
\mathrm{Ric}_M \ge
- \frac{\sqrt{n-1}}{2} |A|^2 g
\end{equation}

always holds (see, for example,  \cite[Proposition 2.2]{chenhongli}).

See also \cite[Proposition 2]{ShXu} for the proof of an inequality, stronger than \eqref{ineq-ricci}, satisfied by   $\mathrm{Ric_M}$  when the ambient space has Ricci curvature bounded below by a constant.\\

We state again the main theorem of this section.
\begin{teorema}
\label{th-docarmo}
   Assume that $M$ is a complete, non-compact, finite $\delta$-index $H$-hypersurface immersed ${\mathbb R}^{n+1}$ and 
   \begin{equation*}
    \operatorname{Ric}_M \ge- \frac{3(1-\delta)}{n-1}|A|^2g.    \end{equation*}
    Then, $M$ is a hyperplane.  
\end{teorema}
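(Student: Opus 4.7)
The plan is to combine the criticality theory for Schr\"odinger-type operators $\Delta + V$ (as introduced at the beginning of Section~3) with a Bochner computation calibrated to the Ricci hypothesis. Since $M$ has finite $\delta$-index, there is a compact $K\subset M$ such that the operator $L_\delta := \Delta + (1-\delta)|A|^2$ is nonnegative on $C_0^\infty(M\setminus K)$. By the Agmon--Allegretto--Piepenbrink principle (which I expect the preceding material of Section~3 to supply), this nonnegativity is equivalent to the existence of a positive $\psi\in C^\infty(M\setminus K)$ with
\[
\Delta \psi + (1-\delta)|A|^2 \psi = 0.
\]
I would then pass to $u := \log\psi$, so that
\[
-\Delta u \;=\; |\nabla u|^2 + (1-\delta)|A|^2.
\]

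Next I would apply the Bochner formula to $u$,
\[
\tfrac12 \Delta|\nabla u|^2 \;=\; |\nabla^2 u|^2 + \langle \nabla u, \nabla \Delta u\rangle + \operatorname{Ric}_M(\nabla u, \nabla u),
\]
substitute the expression for $\Delta u$, insert the Ricci lower bound with the prescribed coefficient $3(1-\delta)/(n-1)$, and use the Cauchy--Schwarz estimate $|\nabla^2 u|^2 \ge (\Delta u)^2/n$ (or a refined Kato inequality, should the sharp constants require it). Expanding $(\Delta u)^2=(|\nabla u|^2+(1-\delta)|A|^2)^2$ produces a $|A|^4$ term with coefficient $(1-\delta)^2/n$ together with cross terms in $|A|^2|\nabla u|^2$; the specific value $3(1-\delta)/(n-1)$ in the hypothesis should be exactly what is needed so that, after one also invokes the $\delta$-stability inequality $(1-\delta)\int |A|^2\phi^2 \le \int|\nabla\phi|^2$ on a suitable test function $\phi$ supported in $M\setminus K$, every negative contribution can be absorbed.

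Finally I would multiply the resulting pointwise inequality by a cutoff $\eta^2$ equal to $1$ on a large geodesic ball and vanishing outside a slightly bigger one, integrate by parts, and control the boundary-type terms by the Caccioppoli-type inequalities established in Appendix~A. Letting the cutoff exhaust $M$ (using completeness), the integrated inequality should force $|A|\equiv 0$ on $M\setminus K$, and analyticity of CMC hypersurfaces extends this to all of $M$. Being totally geodesic, complete, connected, and noncompact in $\mathbb{R}^{n+1}$, $M$ is then a hyperplane.

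The hard part will be calibrating constants: the sharpness of $3(1-\delta)/(n-1)$ suggests the argument has essentially no slack, so every Cauchy--Schwarz inequality used to decouple mixed terms like $\psi\langle\nabla u,\nabla|A|^2\rangle$ has to be performed with the optimal Young parameter. Identifying the right test function in the $\delta$-stability inequality (most likely something of the form $\eta\,\psi^\alpha$, with $\alpha$ chosen so that the $|A|^4\eta^2$ term in the final integral inequality has a strictly positive coefficient) will be the main technical difficulty, and is precisely what couples Bochner, Kato, and $\delta$-stability in a way that makes the Ricci constant $3(1-\delta)/(n-1)$ appear naturally.
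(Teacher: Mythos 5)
Your overall strategy (finite $\delta$-index $\Rightarrow$ a positive (super)solution of $\Delta+(1-\delta)|A|^2$ outside a compact set, then exploit the Ricci hypothesis) starts from the same place as the paper, but from there you diverge, and the divergence is where the gap lies. The paper does \emph{not} run a Bochner formula for $\log\psi$ integrated over $M$. It performs a conformal change $\tilde g=u^{2k}g$ with $u$ a positive supersolution, takes a $\tilde g$-minimizing geodesic ray $\gamma\subset M\setminus K$ (Lemma \ref{th:cheng}), and applies the second variation of energy \emph{along that single ray}. The decisive point is that the resulting cutoff argument is one-dimensional: with $\varphi_R$ supported in $[a,2R]$ and $|(\varphi_R)_s|\le c/R$, the error term $\int_0^{2R}(\varphi_R)_s^2\,ds$ is $O(1/R)$ and vanishes with \emph{no hypothesis on $M$}. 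Your final step, by contrast, multiplies a pointwise inequality by $\eta^2$ with $\eta$ cutting off between geodesic balls $B_R$ and $B_{2R}$ and integrates over $M$; the error is of the form $R^{-2}\int_{B_{2R}\setminus B_R}(\cdots)$, and Theorem \ref{th-docarmo} assumes no volume growth control whatsoever, so there is no way to make this term vanish. This is precisely the obstruction that the Bonnet--Myers/conformal method is designed to circumvent, and it is why Section 2 (which does integrate over balls) needs the subexponential volume growth hypothesis while Section 3 does not.

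Two further points. First, you treat $3(1-\delta)/(n-1)$ as a sharp Bochner constant that will ``exactly'' close your estimates; in the paper it is not sharp in that sense at all. The proof of Theorem \ref{th-docarmoV} works for any $\operatorname{Ric}_M\ge-\beta Vg$ with $\beta<\tfrac{4}{n-1}$, the threshold $\tfrac{4}{n-1}$ coming from the requirement $\tfrac{(n-1)k}{4}-1<0$ in the one-dimensional second-variation computation (the factor $n-1$ is the number of normal directions to $\gamma$, and the $4$ comes from completing the square after substituting $\varphi\mapsto\varphi u^{k/2}$). There is no reason to expect the same numerology to balance a full Bochner identity with $|\nabla^2u|^2\ge(\Delta u)^2/n$. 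Second, your mechanism for concluding is underspecified: the paper does not deduce $|A|\equiv0$ from a single integral inequality, but shows that \emph{every} positive supersolution of $\hat L_V=\Delta+\tfrac{\beta}{k}V$ must be an exact solution along some ray; if $\hat L_V$ were subcritical one could manufacture a strict supersolution everywhere (via the weighted spectral gap, Theorem \ref{groundstate}), a contradiction, so $\hat L_V$ is critical, and then the convexity/extremality structure of Proposition \ref{convex} (with $V_0=0$, $V_1=V$, $t=\beta/k\in(0,1)$) forces $V\equiv0$. This criticality-theoretic endgame is an essential part of the argument and is absent from your sketch. As written, the proposal cannot be completed without either adding a volume growth hypothesis or replacing the global Bochner integration by the localization to a geodesic ray.
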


In the proof, we employ a conformal method based on estimates along a suitably chosen geodesic, that is, in a fixed direction of $M$. This approach is rather classical for results of do Carmo type (see, for instance, \cite{cheng}, \cite{elbertnelliros}, \cite{tinagliazhou}).

In our setting, the hypothesis on the Ricci curvature completely removes the need for any dimensional assumption. In particular, our result holds for arbitrary dimension $n$. Moreover, no restriction on $\delta$ is required.

The geodesic used in the argument is constructed as in \cite[Proposition 2.1]{cheng}. For the reader’s convenience, and since it plays a key role in what follows, we state it here in the form of a Lemma.

\begin{lemma}{\em \cite[Proposition 2.1, claim 1]{cheng}}
\label{th:cheng}
Let $(M, g)$ be a complete Riemannian manifold and let $K \subset M$ be a compact subset. Let $u >0$ be a positive function.
Define the conformal change of metric $\tilde g= u^ {2k}g$. Then, there exists a minimizing geodesic $\gamma (s): [0, +\infty) \to M \setminus K$ in the metric $\tilde g$,
where the parameter $s$ is arc length in the metric $g$. Moreover, $\gamma$ has infinite length in the metric $g$.
\end{lemma}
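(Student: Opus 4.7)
The plan is to construct $\gamma$ by the classical ``ray from a point'' argument used throughout Riemannian geometry (Hopf–Rinow style), adapted to (i) the open subset $M\setminus K$ and (ii) the conformally changed metric $\tilde g=u^{2k}g$, and then to reparametrize by the original arc length $s$ of $g$. To avoid the limiting curve entering $K$, I would first thicken $K$ slightly: choose a compact set $K'$ with smooth boundary such that $K\subset\mathrm{int}(K')$, and work with $M\setminus K'$ in the construction, so that the final ray will automatically lie in $M\setminus K$.

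Fix a basepoint $p_0\in M\setminus K'$. Since $(M,g)$ is complete and non-compact, there is a sequence $q_m\in M\setminus K'$ with $d_g(p_0,q_m)\to\infty$. For each $m$, consider the infimum of $\tilde g$-lengths of piecewise smooth curves from $p_0$ to $q_m$ that remain in $M\setminus K'$; take a minimizing sequence, parametrize each curve proportionally to its $\tilde g$-arc length on $[0,1]$, and apply Arzelà–Ascoli (the curves are uniformly Lipschitz with respect to any auxiliary complete background metric on $M$). The limit is a minimizing curve $\gamma_m:[0,1]\to\overline{M\setminus K'}$; since $\tilde g=u^{2k}g$ is smooth and $u>0$, $\gamma_m$ is a smooth $\tilde g$-geodesic wherever it lies in the interior $M\setminus K'$. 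Now reparametrize $\gamma_m$ by $g$-arc length $s$, obtaining $\gamma_m:[0,L_m]\to M\setminus K'$ with $L_m\geq d_g(p_0,q_m)\to\infty$, and set $v_m:=\gamma_m'(0)\in S_{p_0}M$ (the unit $g$-sphere at $p_0$, which is compact).

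Extract a subsequence with $v_m\to v$. Then the curves $\gamma_m$, parametrized by $g$-arc length and all starting at $(p_0,v_m)$, are equicontinuous on any fixed compact interval $[0,R]$ (Lipschitz with constant $1$ in $g$), so by Arzelà–Ascoli again they converge uniformly on compacts to a curve $\gamma:[0,+\infty)\to M$. Because $L_m\to\infty$, the domain is all of $[0,+\infty)$; because each $\gamma_m$ avoids $\mathrm{int}(K')\supset K$, the limit $\gamma$ lies in $M\setminus K$; and because any subarc of $\gamma$ is the limit of minimizing subarcs of $\gamma_m$, the curve $\gamma$ is $\tilde g$-minimizing and hence a smooth $\tilde g$-geodesic (as it lies in the interior of $M\setminus K'$ after the thickening). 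The parameter $s$ is $g$-arc length by construction, and the infinite $g$-length is built into the parametrization on $[0,+\infty)$.

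The main obstacle is not in the abstract diagonal/limit argument, which is routine, but in two subtler points. First, one must ensure that the minimizers $\gamma_m$ can be chosen inside $M\setminus K'$: the set $(M\setminus K',\tilde g)$ need not be complete (no a priori control on $u$ at infinity or near $\partial K'$), so Hopf–Rinow does not apply, and the minimizing sequence could in principle escape. One handles this by running Arzelà–Ascoli against the $g$-metric (in which $M$ is complete) rather than against $\tilde g$, using the convexity-style fact that any uniform limit of paths in $\overline{M\setminus K'}$ remains there. Second, one must verify that the limit ray itself does not touch $\partial K'$; since each $\gamma_m$ lies in the closed set $\overline{M\setminus K'}$ and the thickening $K'\supsetneq K$ was fixed in advance, the worst that can happen is $\gamma\subset\overline{M\setminus K'}\subset M\setminus K$, which is exactly what the statement requires.
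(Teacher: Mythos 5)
The paper itself gives no proof of this Lemma --- it is quoted directly from \cite[Proposition 2.1]{cheng} --- so your attempt can only be measured against the standard construction, which is indeed the one you outline (limits of constrained length-minimizers, Arzel\`a--Ascoli, reparametrization by $g$-arclength). The strategy is the right one, but there is a genuine gap at the very first step: the existence of the point-to-point minimizers $\gamma_m$. You parametrize a minimizing sequence proportionally to $\tilde g$-arclength and assert that the curves are ``uniformly Lipschitz with respect to any auxiliary complete background metric on $M$''. This is not true: $|c'|_g=u^{-k}|c'|_{\tilde g}$, and since no positive lower bound on $u$ is assumed (for $k>0$; no upper bound for $k<0$), a bound on the $\tilde g$-length gives no control on the $g$-speed once the curves leave a compact set --- which is exactly the escape scenario you say you want to exclude. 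The danger is not hypothetical: in $(\mathbb{R}^2,\,e^{-2|x|^2}g_{\mathrm{eucl}})$ there is no length-minimizing curve joining $(2,0)$ to $(-2,0)$, because a detour out to radius $R$ and back costs only about $2\int_2^R e^{-t^2}\,dt+\pi R e^{-R^2}$, so the infimum is approached only by sequences escaping to infinity and is not attained. Hence the intermediate claim ``a minimizer $\gamma_m$ exists'' can simply fail, and the remedy you propose (running Arzel\`a--Ascoli against $g$) is precisely the step that lacks justification. The standard fix is to minimize the $\tilde g$-length from $p_0$ to the geodesic spheres $\partial B^M_{p_0}(R_m)$ among curves contained in $\overline{B^M_{p_0}(R_m)}\setminus\operatorname{int}(K')$: truncating any competitor at its first hitting time of the sphere confines the whole minimizing class to a compact set on which $u$ is pinched between positive constants, so $g$ and $\tilde g$ are uniformly equivalent there, your compactness argument goes through, and minimality among all curves in $M\setminus\operatorname{int}(K')$ follows from the same truncation remark.

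A second, smaller issue: your final paragraph settles for $\gamma\subset\overline{M\setminus K'}$, but if $\gamma$ meets $\partial K'$ at some $s_0>0$ it is, near that point, neither a smooth $\tilde g$-geodesic nor minimizing with respect to unconstrained variations (only with respect to competitors avoiding $\operatorname{int}(K')$). Since the Lemma is invoked precisely to write the second-variation inequality \eqref{eq:secondvariation} along $\gamma$ for arbitrary compactly supported $\varphi$, the conclusion ``contained in $M\setminus K$'' is not by itself what is needed: one must still justify nonnegativity of the index form, e.g.\ by arguing that the constrained minimizer can only touch a suitably chosen obstacle along controlled boundary portions, or by following the argument of \cite{cheng} verbatim. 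Finally, note that the statement assumes only that $u$ is positive, while you use smoothness of $\tilde g$ to conclude that minimizers are smooth geodesics; this is harmless in the application, where $u\in C^{2,\alpha}_{\mathrm{loc}}$, but it should be stated.
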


 In the following subsection we recall some useful definitions and results from criticality theory, pioneered by Murata \cite{murata} and Pinchover-Tintarev \cite{Pinchover1}, \cite{pinchover2}, \cite{pinchover3}. We also point out that the presentation in \cite{catinocriticality}, together with \cite{pigolasettirigoli}, provides a convenient reference for our purposes.

\subsection{Some criticality theory}
Criticality theory provides a functional-analytic classification of Schr\"odinger-type operators $L_V= \Delta +V$.
 The theory distinguishes between critical and subcritical operators depending on whether the associated quadratic form admits (or not) a positive Hardy weight, or equivalently, the operator possesses a weighted spectral gap, extending, in this sense, the classical dichotomy between parabolic and non-parabolic manifolds. \\Throughout this section we assume that $V \in C^{0, \alpha}_{loc}(M)$. In the general framework of criticality theory, this regularity condition on $V$ can be relaxed  to $V \in L^{\infty}_{loc}(M)$, as in \cite{pigolasettirigoli} and \cite{catinocriticality}. Hence, the statements that follow may be regarded as special cases of the more general theorems in those works. 
 
 We remark that we write $L_V \ge 0$ on $\Omega$ to indicate the nonnegativity of $-\int_\Omega \varphi L_V \varphi$ for all $\varphi \in C_c^\infty(\Omega)$. 
This condition is equivalent to the existence of a positive solution $u $ of $L_V u \le 0,$ locally of class $C^2$ (see, for instance, \cite[Lemma 3.10 and Remark 3.11]{pigolasettirigoli}). Note that we make extensive use of the existence of such function $u$ in the proof of the main Theorem.\\

Let $V \in C^{0, \alpha }_{loc}(M)$ and let
\(
L_V = \Delta + V.
\)
Consider $\Omega \subset M$ an open set and define the quadratic form associated to the operator $L_V$ as
\[
Q_V(\varphi) = \int_{\Omega} \Big( |\nabla \varphi|^2 - V \varphi^2 \Big) \, d\mu,
\]
for each $\varphi \in \operatorname{Lip}_c (\Omega).$
\begin{definition}
 We say that the operator $L_V$ is \emph{subcritical} in $\Omega$ if there exists a function $w \in L^1_{loc}(\Omega)$ with
\[
w \geq 0, \quad w \not\equiv 0 \ \text{ in } \ \Omega,
\]
(called a \emph{Hardy weight}) such that
\[
\int_{\Omega} w |\varphi|^2 \, d\mu \ \leq \ Q_V(\varphi),
\]
for every test function $\varphi \in \mathrm{Lip}_c(\Omega)$.
Otherwise, $L_V$ is said  to be \emph{critical}.
\end{definition}
\begin{definition}
We say that \( L_V \) has a \emph{weighted spectral gap} in \( \Omega \) if there exists
a continuous function \( W \in C(\Omega) \), with \( W > 0 \) on \( \Omega \), such that
\[
\int_{\Omega} W\, |\varphi|^2 \le Q_V(\varphi)
\quad \forall\, \varphi \in \mathrm{Lip}_c(\Omega).
\]

\end{definition}

Let us state two useful results from criticality theory that will be used in what follows.
\begin{teorema} {\em \cite[Theorem 2.3]{catinocriticality}}
\label{groundstate}
Let $M$ be a connected, non-compact Riemannian manifold, and let $V \in C^{0 ,\alpha}_{loc}(\Omega)$.
Consider the Schrödinger-type operator
\[
L_V = \Delta + V,
\]
and assume that $L_V \geq 0$ on $\Omega \subset M$. The following properties are equivalent:
\begin{enumerate}
    \item $L_V$ is subcritical on $\Omega$,
    \item $L_V$ has a weighted spectral gap on $\Omega$.
\end{enumerate}
\end{teorema}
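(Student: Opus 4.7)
The direction $(2)\Rightarrow(1)$ is tautological: any strictly positive continuous $W$ on $\Omega$ automatically lies in $L^1_{\mathrm{loc}}(\Omega)$ and is not identically zero, so it serves as a Hardy weight and witnesses subcriticality. All the content lies in the converse $(1)\Rightarrow(2)$, for which my plan is to argue in the spirit of Allegretto--Piepenbrink--Pinchover criticality theory.

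First, I would apply the Allegretto--Piepenbrink principle to the hypothesis $L_V\ge 0$ to extract a positive classical solution $u_0\in C^{2,\alpha}_{\mathrm{loc}}(\Omega)$ of $L_V u_0=0$. Since subcriticality is equivalent to the existence of a positive minimal Green function $G_V$ for $L_V$ on $\Omega$, I would next produce a second positive $L_V$-harmonic function $u_1$ on $\Omega$, linearly independent from $u_0$---for instance, by averaging $G_V(\cdot,y)$ against a smooth compactly supported density $\rho(y)$ to obtain a smooth positive supersolution, and then correcting it to a genuine solution. Next, the ground-state (Doob) transform $\varphi=u_0\psi$ rewrites the quadratic form as
\[
Q_V(u_0\psi)=\int_\Omega u_0^{\,2}|\nabla\psi|^2\, d\mu,
\]
reducing the task to exhibiting a continuous positive weight $W$ satisfying a Hardy-type inequality for the weighted Laplacian on $(\Omega,u_0^{\,2}d\mu)$.

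The weight I would propose is of Devyver--Fraas--Pinchover type, built explicitly from the ratio of the two positive solutions:
\[
W(x)=\tfrac14\Bigl|\nabla\log\tfrac{u_1}{u_0}\Bigr|^2(x).
\]
Continuity of $W$ follows from the H\"older regularity of $u_0,u_1$ inherited from $V\in C^{0,\alpha}_{\mathrm{loc}}$, while strict positivity is a consequence of unique continuation: the linear independence of $u_0,u_1$ forbids $u_1/u_0$ from being locally constant, hence $|\nabla\log(u_1/u_0)|>0$ everywhere on $\Omega$. Plugging this back through the Doob transform produces the desired continuous positive Hardy weight.

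The main obstacle I foresee is the careful selection of $u_1$: a bare Green function $G_V(\cdot,y_0)$ blows up at its pole and would yield a non-continuous $W$, so some regularization (averaging against a smooth density, or diagonal extraction from Dirichlet problems on exhausting precompact subdomains) is needed to obtain a globally regular second positive solution. As a backup avoiding the ground-state transform altogether, I would exhaust $\Omega$ by precompact $\Omega_n\Subset\Omega$, verify that the Dirichlet principal eigenvalue of $-L_V$ on each $\Omega_n$ is strictly positive (by applying the strong maximum principle to the zero extension of any hypothetical ground state to contradict subcriticality on $\Omega$), and patch the resulting local weighted $L^2$ bounds via a partition of unity with geometrically decaying coefficients to assemble a global continuous positive $W$.
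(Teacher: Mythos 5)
This statement is quoted in the paper as \cite[Theorem 2.3]{catinocriticality} and is not proved there, so there is no internal proof to compare against; I am assessing your argument on its own. The direction $(2)\Rightarrow(1)$ is indeed immediate. For $(1)\Rightarrow(2)$ your main route has two genuine gaps. First, it presupposes a second positive solution $u_1$ of $L_V u_1=0$ on all of $\Omega$, linearly independent of $u_0$. This can fail for subcritical operators: $-\Delta$ on $\mathbb{R}^n$, $n\ge 3$, is subcritical, yet every positive harmonic function on $\mathbb{R}^n$ is constant, so no such $u_1$ exists. Averaging $G_V(\cdot,y)$ against $\rho\ge 0$ produces a function with $L_V u=-\rho$, i.e.\ a strict supersolution on $\operatorname{supp}\rho$, and there is no canonical way to ``correct'' it to a global positive solution. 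Second, even when $u_1$ exists, strict positivity of $W=\tfrac14\bigl|\nabla\log(u_1/u_0)\bigr|^2$ does not follow from unique continuation: that principle only forbids $u_1/u_0$ from being constant on an open set, while the gradient of a nonconstant function can still vanish at points (take $u_0\equiv 1$ and $u_1=2+x_1x_2$ harmonic on a small Euclidean ball; the weight vanishes at the origin). So the proposed $W$ need not witness a weighted spectral gap, and the theorem's conclusion that $W$ can be taken \emph{strictly} positive everywhere is exactly what your construction fails to deliver.

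Your backup route is closer to the standard proof but misplaces where subcriticality enters. Strict positivity of the Dirichlet principal eigenvalue of $-L_V$ on each precompact $\Omega_n\Subset\Omega$ holds for \emph{every} nonnegative $L_V$ --- critical or subcritical --- by comparing the local ground state with a global positive supersolution via the maximum principle; so it cannot be the decisive input, or else critical operators would also acquire a spectral gap. Moreover the local bounds $Q_V(\varphi)\ge \lambda_1(\Omega_n)\int_{\Omega_n}\varphi^2$ are only valid for $\varphi$ supported in $\Omega_n$ and do not sum to a global inequality. The missing key lemma is the perturbation characterization of subcriticality: for every $0\le\psi\in C_c^\infty(\Omega)$ with $\psi\not\equiv 0$ there exists $\varepsilon>0$ such that $Q_{V+\varepsilon\psi}\ge 0$ on all of $\Omega$ (this is essentially the content of Proposition \ref{convex} and of the Pinchover--Tintarev ground state alternative). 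Granting it, one chooses bumps $\psi_n$ with $\sum_n\{\psi_n>0\}=\Omega$, constants $\varepsilon_n$, and sets $W=\sum_n 2^{-n}\varepsilon_n\psi_n$, so that
\[
\int_\Omega W\varphi^2=\sum_n 2^{-n}\varepsilon_n\int_\Omega\psi_n\varphi^2\le\sum_n 2^{-n}Q_V(\varphi)=Q_V(\varphi),
\]
with $W$ continuous and positive. Until that lemma is stated and proved, the ``patching with geometrically decaying coefficients'' is the entire content of the theorem, not a routine final step.
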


\begin{proposizione}{\em \cite[Theorem 2.7]{catinocriticality}}
\label{convex}
Let $V_0, V_1 \in C^{0, \alpha}_{loc}(M)$ and assume that on $\Omega \subset M$
\[
L_{V_0} \geq 0, \qquad L_{V_1} \geq 0.
\]
Then, setting
\[
V_t = (1 - t)V_0 + tV_1 \qquad \text{for } t \in [0,1],
\]
it holds that
\[
L_{V_t} \geq 0  \quad \text{on } \Omega.
\]
Moreover, if $V_0$ does not coincide with $V_1$ almost everywhere, then for each $t \in (0,1)$, the operator $L_{V_t}$ is subcritical on $\Omega$. 
\\In other words, the set
\[
\mathcal{K} =\left\{ V \in C^{0, \alpha}_{loc} \; : \; L_V \geq 0 \hspace{0.1cm} \text{on } \Omega\right\}
\]
is a convex set whose extremal points are exactly those $V$ for which $L_V$ is \emph{critical}.
\end{proposizione}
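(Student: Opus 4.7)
The plan is to prove the proposition by the classical \emph{geometric mean of positive supersolutions} argument, a cornerstone of criticality theory due to Pinchover and Murata. First I would use the characterization recalled in the excerpt that $L_V\ge 0$ on $\Omega$ is equivalent to the existence of a positive $u\in C^2(\Omega)$ with $L_V u\le 0$; pick such $u_0,u_1$ for $V_0,V_1$ and consider the log-convex combination $u_t:=u_0^{\,1-t}\,u_1^{\,t}$. Since $\log u_t=(1-t)\log u_0+t\log u_1$, the identity $\Delta\log u=\Delta u/u-|\nabla\log u|^2$ combined with the elementary
\[
|(1-t)a+tb|^2-(1-t)|a|^2-t|b|^2=-t(1-t)|a-b|^2
\]
(applied with $a=\nabla\log u_0$, $b=\nabla\log u_1$) yields, after rearrangement and use of the two supersolution inequalities,
\[
L_{V_t}u_t+t(1-t)\bigl|\nabla\log(u_0/u_1)\bigr|^2 u_t\ \le\ 0.
\]
Dropping the nonnegative gradient term gives $L_{V_t}u_t\le 0$, so $L_{V_t}\ge 0$, proving convexity of $\mathcal{K}$.

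For the subcriticality claim, I would read the displayed inequality as $L_{V_t+W_t}u_t\le 0$ with $W_t:=t(1-t)\bigl|\nabla\log(u_0/u_1)\bigr|^2\ge 0$, so $L_{V_t+W_t}\ge 0$ on $\Omega$, equivalently
\[
\int_\Omega W_t\,\varphi^2\,d\mu\ \le\ Q_{V_t}(\varphi)\qquad\forall\,\varphi\in\mathrm{Lip}_c(\Omega).
\]
Hence $W_t$ is a Hardy weight for $L_{V_t}$ and the operator is subcritical as soon as $u_0/u_1$ is non-constant.

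To rule out the degenerate case in which $u_0/u_1$ is forced to be constant, I would argue by contradiction. If $L_{V_{t_0}}$ were critical for some $t_0\in(0,1)$, the standard ground-state characterization of critical Schr\"odinger operators guarantees that its positive supersolutions are unique up to a positive multiplicative constant and coincide with a ground state $\phi_{t_0}$ satisfying $L_{V_{t_0}}\phi_{t_0}=0$. Applying this to $u_{t_0}=u_0^{\,1-t_0}u_1^{\,t_0}$ would force equality throughout the chain above, giving simultaneously $L_{V_0}u_0=0$, $L_{V_1}u_1=0$, and $\nabla\log(u_0/u_1)\equiv 0$; the last identity yields $u_0=k u_1$ for a positive constant $k$, and substituting into $L_{V_0}u_0=L_{V_1}u_1=0$ produces $(V_0-V_1)u_0\equiv 0$, so $V_0=V_1$ on $\Omega$, contradicting the hypothesis. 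The characterization of extremal points is then immediate: a critical $V$ cannot be a non-trivial convex combination by what we just proved, and conversely a Hardy weight $W$ for a subcritical $V$ gives $L_{V\pm W/2}\ge 0$ (the plus sign by convexity of $\mathcal{K}$, the minus sign because $Q_{V-W/2}=Q_V+\tfrac12\int W\varphi^2\ge 0$), so $V=\tfrac12(V-W/2)+\tfrac12(V+W/2)$ is a non-trivial convex combination inside $\mathcal{K}$.

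The main obstacle is the ground-state rigidity invoked in the third step: the uniqueness of positive supersolutions at criticality is a non-trivial input from the Agmon--Allegretto--Piepenbrink theory. The pointwise computation in the first paragraph is routine calculus, and the passage from a pointwise supersolution inequality to a quadratic-form inequality is a standard Picone-type integration by parts against a test function of the form $\varphi^2/u_t$.
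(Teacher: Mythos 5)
Your proof is essentially correct, but note that the paper does not prove this proposition at all: it is quoted verbatim from \cite[Theorem 2.7]{catinocriticality}, so there is no in-paper argument to compare against. What you wrote is the classical Pinchover-style proof that the cited reference itself follows: the geometric mean $u_t=u_0^{1-t}u_1^{t}$ of positive supersolutions, the identity
\[
\frac{L_{V_t}u_t}{u_t}=(1-t)\frac{L_{V_0}u_0}{u_0}+t\frac{L_{V_1}u_1}{u_1}-t(1-t)\bigl|\nabla\log(u_0/u_1)\bigr|^2,
\]
and the resulting Hardy weight $W_t=t(1-t)|\nabla\log(u_0/u_1)|^2$. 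Two small remarks. First, the convexity of $\mathcal{K}$ needs none of this: $Q_{V_t}(\varphi)=(1-t)Q_{V_0}(\varphi)+tQ_{V_1}(\varphi)$ by linearity of $V\mapsto Q_V$, so $L_{V_t}\ge 0$ is immediate; the supersolution computation is only needed to produce the Hardy weight. Second, your treatment of the degenerate case ($u_0/u_1$ constant) correctly isolates the one genuinely non-elementary input, namely that a critical operator admits a unique positive supersolution up to scaling and that this supersolution is a ground state (hence a solution); this is a standard fact of Agmon--Pinchover theory, available in the paper's own references \cite{pinchover3, pigolasettirigoli, catinocriticality}, so invoking it is legitimate here. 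The justification ``$L_{V+W/2}\ge 0$ by convexity'' is terse but works: $V+W\in\mathcal{K}$ by the Hardy inequality and $V+W/2=\tfrac12 V+\tfrac12(V+W)$.
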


\subsection{The proof of the  theorem}
In this subsection, we  will prove Theorem \ref{th-docarmo}. It is convenient to consider first a more general Schrödinger-type operator satisfying $L_V \ge 0$ outside a compact set. The general result can then be applied to finite $\delta$-index hypersurfaces with  Ricci curvature bounded from below for a suitable choice of $V$.

\begin{teorema}
    \label{th-docarmoV}
    Let $(M,g)$ be a complete, non-compact, $H$-hypersurface immersed in $\mathbb R^{n+1}$. Assume that there exists a compact set $K$ such that
    \begin{equation*}
       \operatorname{Ric}_{M} \ge -\beta V g \qquad \text{and} \qquad L_V= \Delta + V \ge 0 \quad \text{on } M \setminus K,
    \end{equation*}
    where
    \begin{equation*}
        \beta \in \left(0, \frac{4}{n-1} \right).
    \end{equation*}
    Then $V \equiv 0$ on $M \setminus K$.
\end{teorema}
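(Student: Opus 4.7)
The plan is a proof by contradiction combining criticality theory with a conformal-change/minimizing-geodesic argument of Cheng-type. Suppose $V\not\equiv 0$ on $M\setminus K$. Since $L_0=\Delta\ge 0$ trivially and $L_V\ge 0$ by hypothesis, Proposition \ref{convex} applied with $V_0\equiv 0$, $V_1=V$ yields that $L_{tV}$ is subcritical on $M\setminus K$ for every $t\in(0,1)$. By Theorem \ref{groundstate} this is equivalent to a weighted spectral gap, which in turn (combined with the supersolution characterization recalled in the paper) gives a continuous $W>0$ and a positive $u\in C^2(M\setminus K)$ satisfying $\Delta u+(tV+W)u\le 0$. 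The parameter $t$ will eventually be taken arbitrarily close to $1$.

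Next, fix $k>0$ to be chosen, set $\tilde g=u^{2k}g$ on $M\setminus K$, and apply Lemma \ref{th:cheng} to obtain a $\tilde g$-minimizing geodesic $\gamma\colon[0,\infty)\to M\setminus K$, parameterized by $g$-arclength $s$, of infinite $g$-length. A direct conformal computation, using that $\gamma$ is a $\tilde g$-geodesic to derive $\nabla_{\dot\gamma}\dot\gamma=k\nabla^{\perp}\phi$ (with $\phi:=\log u$), yields the identity
\[
\widetilde{\operatorname{Ric}}(\dot\gamma,\dot\gamma)=\operatorname{Ric}_M(\dot\gamma,\dot\gamma)-(n-2)k\phi''-k\Delta\phi \qquad\text{along }\gamma.
\]
Combined with $\operatorname{Ric}_M\ge-\beta V g$ and the consequence $-k\Delta\phi\ge k(tV+W)+k|\nabla\phi|^2$ of the supersolution inequality, this provides a pointwise lower bound on $\widetilde{\operatorname{Ric}}(\dot\gamma,\dot\gamma)$ in terms of $(kt-\beta)V$, $kW$, $|\nabla\phi|^2$ and $\phi''$.

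Plugging this into the sum of index forms for $\gamma$ with test fields $Y_i=u^{\alpha}\eta(s)E_i$ (where $\{E_i\}_{i=1}^{n-1}$ is a $g$-parallel orthonormal frame normal to $\dot\gamma$, $\eta$ is a compactly supported cutoff in $s$, and $\alpha\in\mathbb{R}$ is a second parameter) and then integrating by parts to eliminate $\phi''$, one obtains, after rearrangement, an inequality of the form
\[
\int u^{2\alpha+k}\eta^2\Bigl[(kt-\beta)V+kW+P_1|\nabla\phi|^2+P_2(\phi')^2\Bigr]\,ds\ \le\ (n-1)\int u^{2\alpha+k}(\eta')^2\,ds,
\]
where $P_1,P_2$ are explicit polynomials in $k,\alpha,n$. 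The choice $\alpha=-k/(n-1)$ annihilates the cross term in $\phi'\eta\eta'$ and yields $P_2=k^2/(n-1)>0$; the remaining conditions that $kt-\beta\ge 0$ and $P_1\ge 0$ are then solvable (with $t\uparrow 1$) precisely in the range $\beta<4/(n-1)$.

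With such an admissible choice and a standard cutoff $\eta\equiv 1$ on $[0,R]$ vanishing outside $[0,2R]$, the left-hand side dominates $k\int_{0}^{R}u^{2\alpha+k}W\,ds>0$ while the right-hand side is controlled by the $\tilde g$-minimizing property of $\gamma$ (which bounds the growth of $u$ along $\gamma$ and forces the cutoff contribution to vanish in the limit), producing the desired contradiction as $R\to\infty$. Hence $V\equiv 0$ on $M\setminus K$, and, applied to $V=(1-\delta)|A|^2$ with $\beta=3/(n-1)<4/(n-1)$, this is what Theorem \ref{th-docarmo} requires. The main obstacle I anticipate is the algebraic bookkeeping that identifies $\beta<4/(n-1)$ as the exact threshold—i.e.\ verifying that the two free parameters $(k,\alpha)$, together with $t\uparrow 1$, suffice to make all left-hand coefficients simultaneously nonnegative throughout this range, while keeping the $kW$-contribution strictly positive.
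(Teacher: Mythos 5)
Your overall strategy coincides with the paper's: rescale $V$ to $tV$ with $t\in(0,1)$ so that Proposition \ref{convex} (applied with $V_0=0$, $V_1=V$) makes the operator subcritical whenever $V\not\equiv0$, extract a positive supersolution $u$, run Cheng's conformal-change/minimizing-geodesic argument via the second variation of energy, and contradict the weighted spectral gap of Theorem \ref{groundstate}. (The paper packages the rescaling as $k\in(\beta,\tfrac{4}{n-1})$ and $\hat V=(\beta/k)V$, which is your $t=\beta/k$; your conformal Ricci identity agrees with the one used there.)

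The genuine gap is in your choice $\alpha=-k/(n-1)$ and the ensuing claim that the boundary term $(n-1)\int u^{2\alpha+k}(\eta')^2\,ds$ is ``controlled by the $\tilde g$-minimizing property of $\gamma$''. With your $\alpha$ the weight is $u^{k(n-3)/(n-1)}$, which is nontrivial for $n\neq3$, and the minimizing property of $\gamma$ enters the argument only through the second variation inequality: it gives no pointwise or integral control on the growth of $u$ along $\gamma$, so there is no reason that $\int_R^{2R}u^{k(n-3)/(n-1)}(\eta')^2\,ds\to0$ as $R\to\infty$. The paper avoids this by taking what in your notation is $\alpha=-k/2$ (replacing $\varphi$ by $\varphi u^{k/2}$): the weight then becomes identically $1$, the right-hand side is $C\int(\eta')^2\,ds\le c/R\to0$ because $\gamma$ has infinite $g$-length, and the cross term $\varphi\varphi_s u_s u^{-1}$, which this choice no longer annihilates, is absorbed by a weighted Young inequality into $k\bigl[\tfrac{(n-1)k}{4}-1\bigr]\int\varphi^2 u_s^2u^{-2}$, whose coefficient is negative precisely because $k<\tfrac{4}{n-1}$. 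So the threshold $\tfrac{4}{n-1}$ is the condition allowing the cross term to be absorbed, not the condition making your $P_1,P_2$ nonnegative; as written, your parameter choice leaves the decisive cutoff term uncontrolled. A minor further point: the continuous Hardy weight $W$ produced by Theorem \ref{groundstate} must be approximated from below by a smooth positive function before one can invoke the existence of a positive $C^2$ supersolution of $\Delta+tV+W$, as the paper does.
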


\begin{proof}
Choose $k \in \left(\beta, \frac{4}{n-1}\right)$ and set 
\begin{equation*}
    \hat V=  \frac{\beta}{k}V.
\end{equation*}
Then,
\begin{equation*}
    \operatorname{Ric}_M \ge -k \hat V g
\end{equation*}
and Applying Proposition \ref{convex} with $V_0=0$
and $V_1= V$, we obtain that $\hat L_V = \Delta + \hat V \ge 0$ on $M \setminus K$.

Moreover, if $V \not\equiv 0$ on $M \setminus K$, then $\hat L_V $ is subcritical.

Since $\hat L_V=\Delta + \hat V \ge 0$ on $M \setminus K$, from Theorem 3.10 and Remark 3.11 in \cite{pigolasettirigoli}, there exists a positive function $u \in C^{2, \alpha}_{loc}$ such that $\hat L_V u \le 0$.

Define a conformal change of the metric as
\[
\tilde g = u^{2k} g.
\]

Using Lemma \ref{th:cheng}, we know that there exists a $\tilde g$-minimizing geodesic $\gamma: [0, +\infty) \to M \setminus K$ that has infinite length in the metric $g$. 

Let $R$ and $\tilde R$ be the
curvature tensors of $M$ in the metrics $g$ and $\tilde g$ respectively. Choose an orthonormal basis
\[
\left\{ e_1 = \frac{\partial\gamma}{\partial s},\ e_2, \dots, e_n\right\}
\]
for $g$ such that $e_2, \dots, e_n$ are parallel along $\gamma$ and let $e_{n+1} = \nu$. This
yields an orthonormal basis
\[
\left\{ \tilde e_1 = \frac{\partial\gamma}{\partial\tilde s}  = u^ke_1,\; \tilde e_2 = u^k e_2,\dots,\tilde e_n = u^k e_n \right\}
\]
for $\tilde g$. We denote by
$ R_{11}$ (respectively $\tilde R_{11}$) the Ricci curvature in the direction of $e_1$ for the metric
$g$ (respectively $\tilde g$).

Since $\gamma$ is $ \tilde g$-minimizing, using the second variation formula for the energy, we have
\begin{equation}
\label{eq:secondvariation}
\int_0^ {+\infty} \left( (n-1)\left(\frac{d\varphi}{d \tilde s}\right)^2 - \tilde R_{11}\varphi^2 \right) d \tilde s \ge 0,
\end{equation}
for any smooth function $\varphi$ with compact support on $[0, + \infty)$. We apply the formula for Ricci curvature
under conformal change of metric (see for instance the appendix in \cite{elbertnelliros} for a full computation)
\[
\tilde R_{11}
= u^{-2k} \left\{R_{11}
- k(n-2)(\log u)_{ss}
- k\frac{ \hat L_V u}{u}+k \hat V
+ k\frac{|\nabla u|^2}{u^2} \right \}.
\]
Combining the last  equality with \eqref{eq:secondvariation} gives (by abuse of notation, we
denote again $\varphi$ the composition $\varphi \circ \tilde s$) 
\begin{equation*}
 \begin{split}   
(n-1) & \int_0^{+ \infty} (\varphi_s)^2 u^{-k} ds \\
& \ge
\int_0^{+\infty} \varphi^2 u^{-k}
\left(R_{11}
- k(n-2)(\log u)_{ss}
- k\frac{ \hat L_V u}{u}+k V
+ k\frac{|\nabla u|^2}{u^2} \right) \,ds.
\end{split} \end{equation*}

Now, replace $\varphi$ by $\varphi u^{k/2}$ to eliminate the $u^{k}$ in the denominator. Rearranging terms we get
\begin{equation*}
    \begin{split}
    -k  \int_0^{+ \infty} \varphi^2 \frac{ \hat L_V u}{u} \, ds&+ \int_0^{+\infty} \varphi^2 
\left(R_{11}+ k \hat V
\right) \, ds +k\int_0^{+\infty} \frac{|\nabla u|^2}{u^2}  \\
 & \le (n-1)\int_0^{+\infty} (\varphi_s)^2 \, ds
+ k(n-1)\int_0^{+\infty} \varphi\varphi_s u_s u^{-1} \, ds
 \\
&+ \frac{k^2(n-1)}{4}\int_0^{+\infty} \varphi^2 u_s^2 u^{-2} \, ds- \int_0^{+\infty} \varphi^2 
k(n-2)(\log u)_{ss}
 \, ds.
  \end{split}
\end{equation*}

Integrating by parts
\[
\int_0^{+\infty} \varphi^2 (\log u)_{ss}\, ds
    = -2 \int_0^{+\infty} \varphi \varphi_s u_s u^{-1} \, ds.
\]
Finally
\begin{equation*}
    \begin{split}
  -k  & \int_0^{+\infty} \varphi^2 \frac{\hat L_V u}{u} \, ds +  \int_0^{+\infty} \varphi^2 
\left(R_{11}+ k \hat V
\right) \, ds \le  \\ & \le   (n-1)\int_0^{+\infty} (\varphi_s)^2 \, ds  - k(n-3 )\int_0^{+\infty} \varphi\varphi_s u_s u^{-1} \, ds  + k \left[\frac{(n-1)k}{4}-1\right]\int_0^{+\infty} \varphi^2 (u_s)^2u^{-2} \, ds
    \end{split}
\end{equation*}

Now, $\operatorname{Ric}_M \geq - k  \hat Vg$ and moreover, we can apply the weighted Young’s inequality to the second term on the right-hand side $\bigg(\left(\frac{a^2}{\varepsilon}+ \varepsilon b^2 \right)\ge -2ab$, with $a= \frac{1}{2} k (n-3) \varphi_s$ and $b= \frac{u_s}{u} \varphi \bigg)$.\\ We obtain
\[
k \int_{0}^{+\infty} \frac{-\hat {L}_Vu}{u} \varphi^2 \leq \left (k \left[\frac{(n-1)k}{4}-1\right]+\varepsilon\right)\int_{0}^{+\infty}\varphi^2(u_s)^2{u^{-2}} + \left((n-1) + \frac{k^2 (n-3)^2}{4 \varepsilon}\right) \int_{0}^{+\infty} (\varphi_s)^2
\]
and since $k < \frac{4}{n-1}$, choosing $\varepsilon$ small enough, the first coefficient on the right hand side is negative.
\\Thus
\[
\int_{0}^{+\infty} \frac{- \hat {L}_Vu}{u} \varphi^2 \leq C \int_{0}^{+\infty} (\varphi_s)^2,
\]
for a constant $C>0$.

Now, take $ 0 \le \varphi_R \leq 1$ such that $\varphi_R \equiv 1$ on $[a,R]$, for some fixed $a$, $\varphi_R= 0$ on $[R,2R]$ 
and $|(\varphi_R)_s| \leq c/R$, where $c$ is a constant depending on $a.$ Letting $R \to \infty$, one obtains $\hat L_Vu=0$ on $\gamma$. Since the above argument only uses the fact that $u$ is a positive supersolution of $\hat L_V$, 
we conclude that every positive supersolution $u$ of $\hat L_V$ satisfies
\begin{equation*}
    \hat L_V u = 0 \quad \text{along some curve in } M \setminus K.
\end{equation*}

Assume by contradiction that $\hat L_V$ is subcritical. By Theorem \ref{groundstate} there exists a positive function $W \in C(M \setminus K)$  such that $ \hat L_V + W \ge 0$. By approximating $W$ from below, we can assume $W \in C^\infty (M \setminus K)$. Then, there exists a positive $u \in C^{2, \alpha}_{loc}(M \setminus K)$ such that $ \hat L_V u + Wu \le 0$, that is $\hat L_V u \le - W u <0$ on the entire $M \setminus K$ and this is impossible.

We can then conclude that $ \hat L_V$ is critical and so $V \equiv 0$.

\end{proof}

As a corollary we get the main theorem of this section.\\

\textit{Proof of Theorem 3.1.}
   Since $M$ has finite $\delta$-index, there exists a compact set $K \subset M$ such that $M \setminus K$ is $\delta$-stable. We take $V= (1- \delta)|A|^2$ and apply Theorem \ref{th-docarmoV}, obtaining $|A| \equiv 0$ on $M \setminus K$. As $M$ has constant mean curvature,  this shows that is a hyperplane.
   
\qed

 \appendix 
\renewcommand{\thesection}{\Alph{section}}

\makeatletter
\renewcommand{\section}{%
  \@startsection{section}{1}{\z@}%
  {-3.5ex \@plus -1ex \@minus -.2ex}%
  {2.3ex \@plus.2ex}%
  {\normalfont\Large\bfseries}%
}
\renewcommand{\@seccntformat}[1]{Appendix~\thesection:~}
\makeatother
 \section{Caccioppoli's type inequalities}

This section is devoted to establishing some Caccioppoli-type inequalities for constant mean curvature $\delta$-stable hypersurfaces. For the reader’s convenience, we present the most significant proofs, which closely follow the arguments in \cite{caccioppoli}.\\

Throughout the Appendix, we assume that the ambient manifold $N$ satisfies the following assumptions: we denote by \( \operatorname{sec}(X,Y) \) the sectional curvature of \( N \) for the 2-plane generated by \( X, Y \in TN \). We assume that there exist constants \( K_1, K_2 \in \mathbb{R} \) such that
\[
K_2 \leq \operatorname{sec}(X,Y) \leq K_1, \quad \text{for all } X, Y \in TN.
\]

where \( R \) is the Riemann curvature tensor  of $N$.
Furthermore, we assume that the derivative of the curvature tensor is bounded. That is, there exists a constant \( K' \) such that, for any elements \( e_i, e_j, e_k, e_s, e_t \) of a local orthonormal frame, one has
\[
\left| \langle (\nabla_{e_t} R)(e_i, e_j)e_k, e_s \rangle \right|^2 \leq K'.
\]

Let \( M \) be a noncompact  hypersurface immersed in \( N \) with  constant mean curvature $H$ oriented by a unit normal vector field  \( \nu \) such that, when $H\not=0, $  then   \( \vec{H} = H \nu \), with \( H > 0 \).  Recall that  \( \phi \)  is the traceless part of the second fundamental form, i.e. $\phi := A - H g$,
where \( g \) is the induced metric on \( M \).
\\

The following result generalizes \cite[Theorem 4.1]{caccioppoli} to the case of finite $\delta$-index $H$-hypersurfaces,
 for any $\delta\in(0,1).$

\begin{teorema}
\label{abcdefg}
    Let \( M \) be a complete, noncompact $H$-hypersurface of dimension $n$ with finite $\delta$-index, immersed in a manifold \( N \). Then, there exists a compact subset \( K \subset M \) such that, for any \( q > -\frac{n+2}{2n} \) and for any \( f \in C_0^\infty((M \setminus K)^+) \), one has:

\begin{equation}\begin{split}
\label{teoremaabcdefg}
&\int_{(M \setminus K)^+} f^2 |\phi|^{2q+2} (A_\delta |\phi|^2 + B H |\phi| + C_\delta H^2 + E_\delta) \leq \\
&\leq D \int_{(M \setminus K)^+} |\phi|^{2q+2} |\nabla f|^2 
+ F \int_{(M \setminus K)^+} f^2 |\phi|^{2q+1} 
+ G \int_{(M \setminus K)^+} f^2 |\phi|^{2q}.
\end{split}\end{equation}
Where the constants are given by:
\begin{align*}
A_\delta &=
\left( \frac{2}{n}(1+\varepsilon) + (2q+1) - \tilde\varepsilon \right)(1-\delta)
- (q+1)(q+1+\tilde\varepsilon), \\[0.3em]
B &=
- a_1 (q+1)(q+1+\tilde\varepsilon), \\[0.3em]
C_\delta &=
n\left( \frac{2}{n}(1+\varepsilon) + (2q+1) - \tilde\varepsilon \right)(1-\delta)
+ n(q+1)(q+1+\tilde\varepsilon), \\[0.3em]
D &=
\frac{q+1+\tilde\varepsilon}{\tilde\varepsilon}
\left( \frac{2}{n}(1+\varepsilon) + (3q+2) - \tilde\varepsilon \right), \\[0.3em]
E_\delta &=
a_2 (q+1)(q+1+\tilde\varepsilon)
+ nK^2 \left( \frac{2}{n}(1+\varepsilon) + (2q+1) - \tilde\varepsilon \right)(1-\delta), \\[0.3em]
F &=
2nK'(q+1)(q+1+\tilde\varepsilon), \\[0.3em]
G &=
- a_3 (q+1)(q+1+\tilde\varepsilon).
\end{align*}

for any \( \varepsilon, \tilde{\varepsilon} > 0 \) and

\[
\begin{aligned}
a_1&= \frac{n(n-2)}{\sqrt{n(n-1)}},\  a_2= \frac{n(K_2 - K_1)H}{2}+ n(2K_2 - K_1),\\ 
a_3&= \frac{n^2 H(K_2 - K_1)}{2} -\frac{n(n-1)}{2\varepsilon}(K_1 - K_2)^2.
\end{aligned}
\]

Moreover, if in addition \( q \geq 0 \), then we can replace \( (M \setminus K)^+ \) with \( M \setminus K \), and if \( M \) is $\delta$-stable, then \( K = \emptyset \).

\end{teorema}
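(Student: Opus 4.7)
The proof adapts \cite[Theorem 4.1]{caccioppoli} to the $\delta$-stability setting by carefully tracking the $(1-\delta)$ factor coming from the quadratic form $Q_\delta$. Since $M$ has finite $\delta$-index, there is a compact $K\subset M$ outside of which $M$ is $\delta$-stable; if $M$ is already globally $\delta$-stable one takes $K=\emptyset$. The test functions $f$ live in $C_0^\infty((M\setminus K)^+)$ so as to avoid the set $\{|\phi|=0\}$ when $q<0$. The plan is to combine two ingredients and then absorb cross gradient terms via weighted Young's inequality.

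The first ingredient is the $\delta$-stability inequality applied to $\varphi = f|\phi|^{q+1}$:
\[
\int(1-\delta)\bigl(|A|^2+\operatorname{Ric}_N(\nu,\nu)\bigr)\varphi^2 \ \leq\ \int|\nabla\varphi|^2.
\]
Expanding $|\nabla\varphi|^2$ and using $|A|^2=|\phi|^2+nH^2$ produces, on the left, the $(1-\delta)$-weighted pieces that will become $A_\delta$, $C_\delta$ and part of $E_\delta$; on the right, the three Dirichlet-type terms $|\phi|^{2q+2}|\nabla f|^2$, $2(q+1)f|\phi|^{2q+1}\langle\nabla f,\nabla|\phi|\rangle$, and $(q+1)^2 f^2|\phi|^{2q}|\nabla|\phi||^2$. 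The second ingredient is the Simons identity for $H$-hypersurfaces in $N$, together with Okumura's inequality $|\operatorname{tr}(\phi^3)|\leq \tfrac{n-2}{\sqrt{n(n-1)}}|\phi|^3$ and the ambient bounds $K_1,K_2,K'$. This yields a pointwise estimate of $|\phi|\Delta|\phi|+|\nabla|\phi||^2$ in terms of $|\nabla\phi|^2$, $|\phi|^4$, $H|\phi|^3$ (out of which $a_1$ comes), $H^2|\phi|^2$, and the ambient quantities producing $a_2$, $a_3$ and the term $nK'|\phi|$. Multiplying by $f^2|\phi|^{2q}$, integrating by parts, and then using the refined Kato inequality $|\nabla\phi|^2\geq(1+\tfrac{2}{n})|\nabla|\phi||^2$ valid for CMC hypersurfaces supplies the factor $\tfrac{2}{n}(1+\varepsilon)$; the auxiliary $\varepsilon$ arises from a Young split of the ambient term involving $K_1-K_2$.

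To conclude, I would add the two inequalities and apply weighted Young's inequality with parameter $\tilde\varepsilon$ to the cross term $f|\phi|^{2q+1}\langle\nabla f,\nabla|\phi|\rangle$. This feeds the coefficient
\[
D=\tfrac{q+1+\tilde\varepsilon}{\tilde\varepsilon}\bigl(\tfrac{2}{n}(1+\varepsilon)+(3q+2)-\tilde\varepsilon\bigr)
\]
to the $|\nabla f|^2$ side while allowing the $f^2|\phi|^{2q}|\nabla|\phi||^2$ pieces to cancel, provided the resulting net coefficient is non-positive. Requiring this is exactly where the hypothesis $q>-\tfrac{n+2}{2n}$ enters, since it makes $\tfrac{2}{n}(1+\varepsilon)+(2q+1)-\tilde\varepsilon$ positive for $\varepsilon$ arbitrary and $\tilde\varepsilon$ small. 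Collecting the surviving coefficients then reproduces the announced constants $A_\delta,B,C_\delta,D,E_\delta,F,G$. The main obstacle is not conceptual but entirely bookkeeping: one must keep the $(1-\delta)$ factor in front of exactly those quantities that come from the $\delta$-stability step ($A_\delta$, $C_\delta$ and the second summand in $E_\delta$) and not in front of the Simons-step contributions ($B$, $D$, $F$, and the first summand in $E_\delta$). Finally, when $q\geq 0$ the integrands vanish on $\{|\phi|=0\}$ so the integration domain extends from $(M\setminus K)^+$ to $M\setminus K$, and when $M$ is globally $\delta$-stable one may take $K=\emptyset$.
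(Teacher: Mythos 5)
Your proposal follows essentially the same route as the paper: integrate the pointwise Simons--Okumura--Kato estimate for $|\phi|\Delta|\phi|$ against $f^2|\phi|^{2q}$, apply the $\delta$-stability inequality to $\psi=f|\phi|^{q+1}$, absorb the cross terms by weighted Young's inequality, and combine the two so that the $f^2|\phi|^{2q}|\nabla|\phi||^2$ terms disappear. The only point to make precise is that the ``addition'' must be the positive linear combination with weights $(q+1)(q+1+\tilde\varepsilon)$ and $\frac{2}{n(1+\varepsilon)}+(2q+1)-\tilde\varepsilon$ (exact cancellation rather than a sign argument), and the requirement that both weights be positive is precisely where $q>-\frac{n+2}{2n}$ enters and where the stated constants come from.
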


\begin{proof}
From Theorem \cite[Theorem 3.1]{caccioppoli},
\begin{equation}
\label{simona1a2a3}
|\phi| \, \Delta |\phi| \geq \frac{2}{n(1+\varepsilon)} |\nabla |\phi||^2 - |\phi|^4 - a_1 H |\phi|^3 + |\phi|^2(nH^2 + a_2) - 2nK' |\phi| + a_3.
\end{equation}
By the fact that $M$ has finite $\delta$- index, we know there exists a compact subset \( K \subset M \) such that \( M \setminus K \) is $\delta$- stable. Notice that if \( M \) is $\delta$- stable, then \( K = \emptyset \).
\\Multiplying inequality \eqref{simona1a2a3} by \( |\phi|^{2q} f^2 \), with \( f \in C_0^\infty((M \setminus K)^+) \), and integrating over \( (M \setminus K)^+ \), we obtain:
\begin{align}
\label{comb1}
& -(2q+1) \int_{(M \setminus K)^+} |\phi|^{2q} f^2 |\nabla |\phi||^2 
- 2 \int_{(M \setminus K)^+} |\phi|^{2q+1} f \langle \nabla f, \nabla |\phi| \rangle 
+ \int_{(M \setminus K)^+} |\phi|^{2q+4} f^2 
\nonumber \\
& + a_1 H \int_{(M \setminus K)^+} |\phi|^{2q+3} f^2 
- (nH^2 + a_2) \int_{(M \setminus K)^+} |\phi|^{2q+2} f^2 
+ 2nK' \int_{(M \setminus K)^+} |\phi|^{2q+1} f^2 
\nonumber \\
& - a_3 \int_{(M \setminus K)^+} |\phi|^{2q} f^2 
\geq \frac{2}{n(1+\varepsilon)} \int_{(M \setminus K)^+} |\nabla |\phi||^2 |\phi|^{2q} f^2.
\end{align}
We observe that, since we allow \( q \) to be negative, we restrict ourselves to the subset \( (M \setminus K)^+ \). If \( q \geq 0 \), then \( f \) can be taken in \( C_0^\infty(M \setminus K) \) as well as the domain of integration of all the integrals in the rest of the proof.
\\Young's inequality applied to the second term on the left hand side gives:
\begin{equation*}
|2 |\phi|^{2q+1} f \langle \nabla f, \nabla |\phi| \rangle| \leq 2 (|\phi|^q f |\nabla |\phi||)(|\phi|^{q+1} |\nabla f|) 
\leq \tilde{\varepsilon} |\phi|^{2q} f^2 |\nabla |\phi||^2 + \frac{1}{\tilde{\varepsilon}} |\phi|^{2q+2} |\nabla f|^2.
\end{equation*}
for some \( \tilde{\varepsilon} > 0 \).
\\Then,
\begin{align*}
& \left( \frac{2}{n(1+\varepsilon)} + (2q+1) - \tilde{\varepsilon} \right) \int_{(M \setminus K)^+} |\phi|^{2q} f^2 |\nabla |\phi||^2 
\leq \frac{1}{\tilde{\varepsilon}} \int_{(M \setminus K)^+} |\nabla f|^2 |\phi|^{2q+2} 
\nonumber \\
& + \int_{(M \setminus K)^+} \left( |\phi|^2 + a_1 H |\phi| - (n H^2 + a_2) \right) |\phi|^{2q+2} f^2 
+ 2nK' \int_{(M \setminus K)^+} |\phi|^{2q+1} f^2 \\
& - a_3 \int_{(M \setminus K)^+} |\phi|^{2q} f^2.
\end{align*}
The $\delta$- stability inequality restricted to \( (M \setminus K)^+ \) gives, for each \( \psi \in C_0^\infty((M \setminus K)^+) \):
\begin{equation*}
\int_{(M \setminus K)^+} |\nabla \psi|^2 
\geq \int_{(M \setminus K)^+} (1- \delta)(|A|^2 + \text{Ric}(\nu, \nu)) \psi^2 
\geq \int_{(M \setminus K)^+} (1-\delta) (|\phi|^2 + nH^2 + nK_2) \psi^2.
\end{equation*}
and taking \( \psi = f |\phi|^{q+1} \),
\begin{align*}
& \int_{(M \setminus K)^+} |\phi|^{2q+2} |\nabla f|^2 
+ (q+1)^2 \int_{(M \setminus K)^+} |\phi|^{2q} f^2 |\nabla |\phi||^2 
+ 2(q+1) \int_{(M \setminus K)^+} |\phi|^{2q+1} f \langle \nabla f, \nabla |\phi| \rangle 
\nonumber \\
& \geq (1- \delta) \int_{(M \setminus K)^+} |\phi|^{2q+4} f^2 + n(H^2 + K_2)(1- \delta) \int_{(M \setminus K)^+} |\phi|^{2q+2} f^2.
\end{align*}
And again by Young's inequality:
\begin{equation*}
2 \left| \int_{(M \setminus K)^+} |\phi|^{2q+1} f \langle \nabla f, \nabla |\phi| \rangle \right|
\leq \tilde{\varepsilon} \int_{(M \setminus K)^+} |\phi|^{2q} f^2 |\nabla |\phi||^2 
+ \frac{1}{\tilde{\varepsilon}} \int_{(M \setminus K)^+} |\phi|^{2q+2} |\nabla f|^2.
\end{equation*}
we obtain
\begin{align}
\label{comb2}
& -(q+1)(q+1 + \tilde{\varepsilon}) \int_{(M \setminus K)^+} |\phi|^{2q} f^2 |\nabla |\phi||^2 
\leq \left( 1 + \frac{(q+1)}{\tilde{\varepsilon}} \right) \int_{(M \setminus K)^+} |\nabla f|^2 |\phi|^{2q+2}
\nonumber \\
& \quad - (1- \delta)\int_{(M \setminus K)^+} f^2 |\phi|^{2q+2} \left( |\phi|^2 + n(H^2 + K_2) \right).
\end{align}
Now, we want to get rid of the term \( |\phi|^{2q} f^2 |\nabla |\phi||^2 \). To achieve this, the idea is to do a suitable linear combination of the inequalities \eqref{comb1} and \eqref{comb2}. One needs 
\( (q+1)(q+1 + \tilde{\varepsilon}) > 0 \) and
\( \frac{2}{n(1+\varepsilon)} + (2q+1) - \tilde{\varepsilon} > 0 \) 
for some \( \tilde{\varepsilon} \) sufficiently small. These two conditions are satisfied if
\begin{equation*}
q >- \frac{n+2}{2n}.
\end{equation*}
Summing
 \( (q+1)(q+1 + \tilde{\varepsilon}) \cdot \eqref{comb1} \) and
\( \left( \frac{2}{n(1+\varepsilon)} + (2q+1) - \tilde{\varepsilon} \right) \cdot \eqref{comb2} \), we get:
\begin{align*}
0 \leq & \left[ \frac{(q+1)(q+1 + \tilde{\varepsilon})}{\tilde{\varepsilon}} + \left( \frac{2}{n(1+\varepsilon)} \right) \left( 1 + \frac{(q+1)}{\tilde{\varepsilon}} \right) \right] \int_{(M \setminus K)^+} |\nabla f|^2 |\phi|^{2q+2}
\nonumber \\
& + \int_{(M \setminus K)^+} \bigg[ (q+1)(q+1 + \tilde{\varepsilon})(|\phi|^2 + a_1 H |\phi| - (n H^2 + a_2)) 
\nonumber \\
&  \quad - \left( \frac{2}{n(1+\varepsilon)} + (2q+1) - \tilde{\varepsilon} \right)(|\phi|^2 + n H^2 + n K_2)(1- \delta) \bigg] |\phi|^{2q+2} f^2
\nonumber \\
& + 2nK' (q+1)(q+1 + \tilde{\varepsilon}) \int_{(M \setminus K)^+} |\phi|^{2q+1} f^2 
- a_3 (q+1)(q+1 + \tilde{\varepsilon}) \int_{(M \setminus K)^+} |\phi|^{2q} f^2.
\end{align*}
Taking $A_\delta, B, C_\delta, D, E_\delta, F$ and $G$ as in the statement of the theorem we conclude the proof.
\end{proof}

The following result generalizes \cite[Theorem 4.2]{caccioppoli} to finite  $\delta$-index $H$-hypersurfaces, provided 
$\delta < \frac{2}{n+2}$.

\begin{teorema}
\label{teo-sdelta}
Let \( M \) be a complete non-compact $H$-hypersurface immersed  in a manifold with constant curvature \( c \). Assume \( M \) has finite $\delta$-index, with $\delta < \frac{2}{n+2}$. Then there exists a geodesic ball \( B_{R_0} \subset M \) such that, for any \( q \) satisfying  \( 0 \leq  q <  - \delta + \sqrt{\delta^2 - \delta + \frac{2}{n}(1- \delta)} \),
\[
\int_{M \setminus B_{R_0}} |\phi|^{2q+4} \leq S_ \delta \int_{M \setminus B_{R_0}} |\phi|^{2q+2}
\]
for some positive constant \( S_\delta \). Moreover, if \( M \) is $\delta$-stable, then we can choose \( B_{R_0} = \emptyset \).
\end{teorema}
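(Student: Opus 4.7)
The plan is to invoke the Caccioppoli-type inequality from Theorem \ref{abcdefg} specialized to constant sectional curvature, and then use a standard cutoff together with Young's inequality to absorb the intermediate power $|\phi|^{2q+3}$ and the gradient term.

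First I would fix $\sigma \in M$. Since $M$ has finite $\delta$-index, there is a compact set $K \subset M$ and a radius $R_0$ with $K \subset B_\sigma^M(R_0)$ such that $M \setminus K$ is $\delta$-stable (when $M$ is itself $\delta$-stable we take $R_0 = 0$ and $K = \emptyset$). As $N$ has constant sectional curvature $c$, we have $K_1 = K_2 = c$ and $K' = 0$; a quick inspection of the constants shows that this forces $a_3 = 0$ and hence $F = G = 0$. Since $q \geq 0$, Theorem \ref{abcdefg} applied on $M \setminus K$ yields, for every $f \in C_0^\infty(M \setminus K)$,
\begin{equation*}
\int_{M \setminus K} f^2 |\phi|^{2q+2}\bigl(A_\delta |\phi|^2 + BH|\phi| + C_\delta H^2 + E_\delta\bigr) \leq D \int_{M \setminus K} |\phi|^{2q+2}|\nabla f|^2.
\end{equation*}

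Next I would pin down the admissible range of $q$. Letting $\varepsilon, \tilde\varepsilon \to 0$ in the definition of $A_\delta$ gives the limit $\bigl(2q+1+\tfrac{2}{n}\bigr)(1-\delta) - (q+1)^2$, which is strictly positive exactly when $q^2 + 2q\delta + \delta - \tfrac{2(1-\delta)}{n} < 0$, i.e. when $q$ lies between the roots $-\delta \pm \sqrt{\delta^2 - \delta + \tfrac{2(1-\delta)}{n}}$. The larger root is positive precisely because $\delta < \tfrac{2}{n+2}$, so for every $q$ in the interval of the statement I can fix $\varepsilon, \tilde\varepsilon$ small enough that $A_\delta > 0$.

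The third step is to dispose of the cross term. Because $B < 0$, the contribution $BH \int f^2 |\phi|^{2q+3}$ is non-positive and can be transferred to the right-hand side. Applying Young's inequality $|\phi|^{2q+3} \leq \tfrac{\eta}{2}|\phi|^{2q+4} + \tfrac{1}{2\eta}|\phi|^{2q+2}$ with $\eta$ small enough that the coefficient of $|\phi|^{2q+4}$ remains positive, and grouping what remains of $C_\delta H^2 + E_\delta$ on either side as needed (this group may have either sign when $c<0$), one obtains
\begin{equation*}
\tilde{A}\int_{M \setminus K} f^2 |\phi|^{2q+4} \leq D \int_{M \setminus K} |\phi|^{2q+2}|\nabla f|^2 + S \int_{M \setminus K} f^2 |\phi|^{2q+2},
\end{equation*}
with $\tilde A, S$ positive constants depending on $c, H, n, \delta, q$ but not on $f$.

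Finally I would choose $f$ equal to $1$ on $B_\sigma^M(R) \setminus B_\sigma^M(R_0 + 1)$, vanishing on $B_\sigma^M(R_0) \cup (M \setminus B_\sigma^M(2R))$, with $|\nabla f|$ bounded by a fixed constant on the inner annulus and by $c_2/R$ on the outer one. If $\int_{M \setminus B_\sigma^M(R_0)} |\phi|^{2q+2}$ is infinite the claim is vacuous; otherwise the outer gradient contribution is $O(R^{-2})$ and vanishes as $R \to \infty$, while the inner-annulus gradient contribution together with the bounded integral over $B_\sigma^M(R_0+1)\setminus B_\sigma^M(R_0)$ gets absorbed into $S_\delta \int_{M\setminus B_\sigma^M(R_0)}|\phi|^{2q+2}$ using local boundedness of $|\phi|$ (enlarging $R_0$ slightly if needed). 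Monotone convergence on the left produces the stated inequality. The main obstacle I foresee is the bookkeeping: keeping the sign of the effective coefficient of $|\phi|^{2q+2}$ under control when $c<0$ and $C_\delta H^2 + E_\delta$ may be negative, and choosing $\eta$ quantitatively so that $\tilde A$ stays bounded away from $0$ uniformly in the cutoff parameter $R$.
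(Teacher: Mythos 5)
Your proposal is correct, and it is exactly the intended route: the paper itself omits the proof of Theorem \ref{teo-sdelta}, deferring to the argument of \cite[Theorem 4.2]{caccioppoli}, which is precisely what you reconstruct — specialize Theorem \ref{abcdefg} to constant curvature (so $F=G=0$ and $E_\delta=cC_\delta$), use $A_\delta>0$ on the stated $q$-range (equivalent to $q<\alpha_2$ and $\delta<\tfrac{2}{n+2}$), absorb the $BH|\phi|^{2q+3}$ term by Young's inequality, and conclude with the annular cutoff and monotone convergence. The only cosmetic remarks are that for $n=2$ one has $B=0$ so there is nothing to absorb, and that the "uniformity in $R$" you worry about at the end is automatic since $\tilde A$, $S$, $D$ do not depend on the test function.
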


The following result generalizes \cite[Theorem 5.4]{caccioppoli} to the case of finite $\delta$-index $H$-hypersurfaces, provided $\delta$ satisfies a suitable inequality. 

\begin{teorema}
\label{gammadth}
    Let \( M \) be a complete non-compact $H$-hypersurface of dimension $n$ immersed in a manifold with constant curvature \( c \).  
Assume \( M \) has finite $\delta$-index and $\delta$ satisfies the inequality
\begin{equation*}
  \delta <  1- \frac{n^2}{2(n+2) \sqrt{n-1}}. 
\end{equation*}
Then there exist a compact subset \( K \subset M \) and a constant \( \gamma > 0 \) such that, for any \( f \in C_0^\infty(M \setminus K) \),
\begin{equation}
\label{gammad}
\gamma \int_{M \setminus K} f^2 |\phi|^{2q+2} \leq D \int_{M \setminus K} |\phi|^{2q+2} |\nabla f|^2. 
\end{equation}
provided either:
\begin{itemize}
    \item[(1)] \( c = 0 \) or \( c = 1 \), and \( q \in [0, q_2) \), or
    \item[(2)] \( c = -1 \), \( \varepsilon > 0 \), \( q \in [1, \operatorname{min}\{\alpha_2, q_2\} - \varepsilon] \), and \( H^2 \geq g_n(q) \),
\end{itemize}

where $g_n,$ $q_1$ and $q_2$ are defined in the proof.
\end{teorema}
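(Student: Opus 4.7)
The plan is to start from the weighted Caccioppoli inequality of Theorem \ref{abcdefg}, exploit the constant-sectional-curvature assumption to kill several of its terms, and reduce the desired inequality to a pointwise positivity statement for a quadratic polynomial in $|\phi|$.

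First, I would use the finite $\delta$-index hypothesis to fix a compact $K\subset M$ outside of which $M$ is $\delta$-stable, so that Theorem \ref{abcdefg} applies to every $f\in C_0^\infty(M\setminus K)$. Since the ambient has constant sectional curvature $c$ we have $K_1=K_2=c$, which gives $a_3=0$ and hence $G=0$; moreover $\nabla R\equiv 0$ in a space form, so one may take $K'=0$ and then $F=0$. Because $q\ge 0$, Theorem \ref{abcdefg} integrates over $M\setminus K$ itself and collapses to
\[
\int_{M\setminus K} f^{2}\,|\phi|^{2q+2}\,P(|\phi|)\;\le\; D\int_{M\setminus K} |\phi|^{2q+2}\,|\nabla f|^{2},
\]
with $P(x):=A_\delta x^{2}+BHx+C_\delta H^{2}+E_\delta$. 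The conclusion \eqref{gammad} then follows at once provided one exhibits a uniform lower bound $P(x)\ge\gamma>0$ for all $x\ge 0$.

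The second step is to analyze $P$. Since $B\le 0$ and $H\ge 0$, whenever $A_\delta>0$ the parabola $P$ attains its minimum on $[0,\infty)$ at $x^{*}=-BH/(2A_\delta)\ge 0$ with value $C_\delta H^{2}+E_\delta-B^{2}H^{2}/(4A_\delta)$. Introducing the shorthand $X:=(1-\delta)\bigl[\tfrac{2}{n}(1+\varepsilon)+(2q+1)-\tilde\varepsilon\bigr]$ and $Y:=(q+1)(q+1+\tilde\varepsilon)$, a direct computation from the definitions gives the clean identities $A_\delta=X-Y$, $C_\delta=n(X+Y)$, $E_\delta=cn(X+Y)$, and $B^{2}=\tfrac{n(n-2)^{2}}{n-1}Y^{2}$. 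Hence $A_\delta>0$ (as $\varepsilon,\tilde\varepsilon\to 0^{+}$) is equivalent to $q^{2}+2\delta q+\delta-\tfrac{2(1-\delta)}{n}<0$, i.e.\ $q<q_{2}$, and $P(x^{*})>0$ reduces to the single algebraic inequality
\[
4(n-1)(X^{2}-Y^{2})(H^{2}+c)\;>\;(n-2)^{2}Y^{2}H^{2}.
\]

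It then remains to split into cases. In case~(1), $c\in\{0,1\}$, the correction $E_\delta\ge 0$ only helps, and for $c=0$ the inequality collapses to $4(n-1)X^{2}>n^{2}Y^{2}$, which at $q=0$ and $\varepsilon,\tilde\varepsilon\to 0^{+}$ is precisely the standing hypothesis $\delta<1-\tfrac{n^{2}}{2(n+2)\sqrt{n-1}}$; by continuity it persists on a full subinterval of $[0,q_{2})$. In case~(2), $c=-1$, the correction $E_\delta$ is negative and one needs $H^{2}$ large enough to compensate; the threshold value of $H^{2}$ making $P(x^{*})=0$ is by definition $g_{n}(q)$, which explains the hypothesis $H^{2}\ge g_{n}(q)$, while the extra constraints $q\ge 1$ and $q\le\min\{\alpha_{2},q_{2}\}-\varepsilon$ are what keep $g_{n}$ finite and the inequality strict uniformly in the auxiliary parameters. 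The main obstacle is computational rather than conceptual: verifying, for one and the same choice of small $\varepsilon,\tilde\varepsilon$, that the delicate chain of algebraic inequalities above indeed closes up to produce a strictly positive $\gamma$.
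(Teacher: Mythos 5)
Your proposal is correct and follows essentially the same route as the paper: specialize Theorem \ref{abcdefg} to constant curvature (so $F=G=0$ and $E_\delta=cC_\delta$), and reduce \eqref{gammad} to the pointwise positivity of the quadratic $A_\delta|\phi|^2+BH|\phi|+C_\delta(H^2+c)$, settled by a discriminant/vertex analysis that yields exactly your inequality $4(n-1)(X^2-Y^2)(H^2+c)>(n-2)^2Y^2H^2$ and the threshold $g_n$. The only difference is that where you invoke continuity near $q=0$, the paper solves the resulting quadratic in $q+1$ explicitly to obtain the full admissible interval $(q_1,q_2)$ (whence $q\in[0,q_2)$), and it records the degenerate subcases $H=0$ separately.
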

\begin{proof}
Note that in this case
\[
K_1 = K_2 = c, \quad K = 0, \quad a_1 = \frac{n(n - 2)}{\sqrt{n(n - 1)}}, \quad a_2 = nc, \quad a_3 = 0.
\]
Furthermore, we can take \( \varepsilon = 0 \) in Kato’s inequality. Therefore, we get
\[
F = 0, \quad G = 0,
\]
\[
A_\delta = \left (\frac{2}{n} + (2q + 1) - \tilde{\varepsilon} \right )(1- \delta) - (q + 1)(q + 1 + \tilde{\varepsilon}),
\]
\[
B = -a_1 (q + 1)(q + 1 + \tilde{\varepsilon}),
\]
\[
C_\delta =n \left( \frac{2}{n(1+\varepsilon)}+(2q+1)-\tilde{\varepsilon}\right )(1 - \delta) +n(q+1)(q+1 + \tilde{\varepsilon})
\]
\[
E_\delta = nc(q+1)(q+1+ \tilde{\varepsilon})+nc  \left( \frac{2}{n}  +2q + 1- \tilde{\varepsilon} \right)(1-\delta)= cC_\delta.
\]

Then inequality \eqref{teoremaabcdefg} becomes (note that we are assuming \( q \geq 0 \)):
\begin{equation}
\int_{M \setminus K} f^2 |\phi|^{2q+2} \left( A_\delta |\phi|^2 + BH|\phi| + C_\delta (H^2 + c) \right) \leq D \int_{M \setminus K} |\phi|^{2q+2} |\nabla f|^2. 
\end{equation}
Define:
\[
\beta_7 = A|\phi|^2 + BH|\phi| + C_\delta(H^2 + c)
\]
and we must find conditions on \( A_\delta, B, C_\delta, c, H \) such that $\beta_7$ is positive. As before, it is enough to do the computations for \( \tilde{\varepsilon} = 0 \) because all quantities are continuous in \( \tilde{\varepsilon} \).\\We now consider cases $c=0,1,-1$ and $H=0, H \neq 0$ separately.
\\\textbf{(1) \( c = 0 \)}  
\begin{itemize}
    \item \( H = 0 \). In this case \( \beta_7 = A_\delta |\phi|^2 \). We only need \( A_\delta > 0 \), i.e. $q \in (\alpha_1, \alpha_2)$, where $\alpha_1, \alpha_2$ are defined as
    \begin{equation}
    \label{eq:alpha}
        \alpha_1= - \delta - \sqrt{\delta^2- \delta + \frac{2}{n}(1- \delta)}, \quad \alpha_2=  -\delta + \sqrt{\delta^2- \delta + \frac{2}{n}(1- \delta)}
    \end{equation}
    \item \( H \neq 0 \). Then
    \[
    \beta_7 = A_\delta |\phi|^2 + BH|\phi| + C_\delta H^2, 
    \]
    and \( \beta_7 > 0 \) for any \( |\phi| \) if and only if \( A_\delta > 0 \) and the discriminant is negative:
    \[
    H^2 \left( B^2 - 4A_\delta C_\delta \right) < 0.
    \]
   and computing the discriminant:
    \[
    \Delta_0 = n H^2 \left( \frac{ n^2}{n - 1} (q+1)^4- 4 \left(2(q+1)+ \frac{n +2}{n} \right)^2(1-\delta)^2\right),
    \]
    the inequality becomes:
    \begin{equation*}
    (q+1)^4 \frac{n^2}{n-1} <  4\left( 2(q+1) + \frac{n + 2}{n} \right)^2 (1- \delta)^2.
    \end{equation*}
    From the positivity of $A_\delta$ the latter is equivalent to
    \begin{equation*}
      (q+1)^2 \frac{n}{\sqrt{n-1}} <  2\left( 2(q+1) + \frac{n + 2}{n} \right) (1- \delta).   
    \end{equation*}
    The discriminant of this polynomial is positive if and only if 
    \begin{equation*}
        \delta < -\frac{(n-2)}{2\sqrt{n-1}}+1.
    \end{equation*}
   Moreover, the inequality holds if and only if $q \in (q_1, q_2)$, where $q_1$ and $q_2$ are defined as 
    \begin{equation}
    \label{eq:x1x2}
\begin{aligned}
q_1 &= \frac{2\sqrt{n-1}}{n}(1-\delta)\left(1-\sqrt{1-\frac{n-2}{2\sqrt{n-1}(1-\delta)}}\right)-1,\\[4pt]
q_2 &= \frac{2\sqrt{n-1}}{n}(1-\delta)\left(1+\sqrt{1-\frac{n-2}{2\sqrt{n-1}(1-\delta)}}\right)-1.
\end{aligned}
\end{equation}

\end{itemize}

\textbf{(2) \( c = -1 \)}
\begin{itemize}
    \item \( H \geq 0 \): Then
    \[
    \beta_7 = A_\delta |\phi|^2 + BH|\phi| + C_\delta (H^2 - 1).
    \]
    The discriminant is:
    \[
    \Delta_{-1} = (B^2 - 4A_\delta C_\delta)H^2 + 4A_\delta C_\delta.
    \]
    The only case with no condition on \( |\phi| \) is \( A_\delta > 0 \) and \( \Delta_{-1} < 0 \). \\One needs \( q \in (\operatorname{max}\{\alpha_1, q_1\},\operatorname{min}\{\alpha_2, q_2\}) \) and \( H^2 > g_n(q) \), where \( g_n \) is defined as
    \begin{equation}
    \label{eq:gn}
g_n(q) = \frac{(2(q+1) - \gamma)^2(1- \delta)^2- (q+1)^4}{(2(q+1) - \gamma)^2(1- \delta)^2 - \mu (q+1)^4}
    \end{equation}
   for $\gamma = \frac{n - 2}{n}$, $ \mu = \frac{n^2}{4(n - 1)}$.\\
    Since \( \sup g_n = +\infty \), we restrict to \( q \in [\operatorname{max}\{\alpha_1, q_1\} + \varepsilon, \operatorname{min}\{\alpha_2, q_2\} - \varepsilon] \).
\end{itemize}

\textbf{(3) \( c = 1 \)}
\begin{itemize}
    \item \( H = 0 \): Then
    \[
    \beta_7 = A_ \delta |\phi|^2 + C_\delta,
    \]
    which is positive for all \( |\phi| \) if \( q \in [\alpha_1, \alpha_2] \).
    
    \item \( H \neq 0 \): Then
    \[
    \beta_7 = A_\delta |\phi|^2 - BH|\phi| + C_\delta (H^2 + 1),
    \]
    which is positive for all \( |\phi| \) if \( A_\delta > 0 \) and
    \[
    \Delta_1 = H^2(B^2 - 4A_\delta C_\delta) - 4A_\delta C_\delta < 0.
    \]
    These hold for all \( H \) if \( q \in (q_1, q_2) \). While, for $q \notin (q_1, q_2)$ one needs
    \[
    H^2 \leq \frac{4A_\delta C_\delta}{B^2 - 4A_\delta C_\delta} = -g_n(q).
    \]
\end{itemize}
\end{proof}
\begin{osservazione}
    Note that the range of $q$ implies the restriction on $\delta$ in the statement. Indeed, we need to have  $- \delta + \sqrt{\delta^2 - \delta + \frac{2}{n}(1- \delta)} > 0 $, that implies $\delta < \frac{2}{n+2}$.\\Moreover $q_2$ should be greater or equal than zero. This imposes some addictional restrictions on $\delta$. Indeed, from a direct computation:
    \begin{equation*}
        \delta < 1- \frac{n^2}{2 \sqrt{n-1}(n+2)}.
    \end{equation*}
    Note that, in order to have some $\delta$ satisfying this inequality one should ask for $n \le 5$.
    Observe also that \begin{equation*}
        1-\frac{n^2}{2 \sqrt{n-1}(n+2)} \le \frac{2}{n+2}
    \end{equation*}
    for all $n \geq 2$. 
\end{osservazione}

The following result is a generalization of \cite[Theorem 5.1]{caccioppoli} to $\delta$-stable hypersurfaces.

\begin{teorema}
\label{beta123}
Let $M$ be a complete non-compact hypersurface immersed with constant mean curvature $H$ in a manifold $N$. Assume $M$ has finite $\delta$-index. Then, there exist a compact subset $K$ of $M$ (which is empty if $M$ is stable) and constants $\beta_1, \beta_2, \beta_3$, such that for every $f \in C_0^\infty(M \setminus K)^+$,
\begin{equation}
\label{beta123eq}
    \beta_1 \int_{(M \setminus K)^+} f^{2q+4} |\phi|^{2q+2} \leq \beta_2 \int_{(M \setminus K)^+} |\nabla f|^{2q+4} + \beta_3 \int_{(M \setminus K)+} f^{2q+4}.
\end{equation}
Moreover, the constant $\beta_1$ is positive if and only if 
\begin{align*}
    - \delta - \sqrt{\delta^2 - \delta + \frac{2}{n}(1-\delta)} < q <   -\delta + \sqrt{\delta^2 - \delta + \frac{2}{n}(1-\delta)} 
\end{align*}
with $\delta \le \frac{2}{n}$.
Moreover, if $q \geq 0$, we can replace $(M \setminus K)^+$ with $M \setminus K$.
\end{teorema}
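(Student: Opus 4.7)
The plan is to derive \eqref{beta123eq} from Theorem~\ref{abcdefg} by inserting an appropriately rescaled test function and converting each term to the homogeneous form via Young's inequality. Since $M$ has finite $\delta$-index, there is a compact set $K$ with $M\setminus K$ being $\delta$-stable (and $K=\emptyset$ if $M$ itself is $\delta$-stable), so Theorem~\ref{abcdefg} applies on $M\setminus K$. The crucial first move is to replace the test function $f$ in \eqref{teoremaabcdefg} by $f^{q+2}$; using $|\nabla(f^{q+2})|^2 = (q+2)^2 f^{2q+2}|\nabla f|^2$, the left-hand side acquires a uniform factor $f^{2q+4}$, while the right-hand side displays a quadratic dependence on $|\nabla f|$ that must be upgraded to $|\nabla f|^{2q+4}$.

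The main tool is the weighted Young's inequality with conjugate exponents $\tfrac{q+2}{q+1}$ and $q+2$, applied to the critical gradient term:
\[
|\phi|^{2q+2} f^{2q+2} |\nabla f|^2 \;\le\; \eta\, |\phi|^{2q+4} f^{2q+4} \;+\; C(\eta)\, |\nabla f|^{2q+4}.
\]
For every remaining subcritical monomial $|\phi|^a f^{2q+4}$ with $a<2q+4$, namely the $BH|\phi|^{2q+3}$, $C_\delta H^2|\phi|^{2q+2}$, $E_\delta|\phi|^{2q+2}$ pieces coming from the bracket on the left, and the $F|\phi|^{2q+1}$, $G|\phi|^{2q}$ terms on the right, a standard Young's bound yields $|\phi|^a\le \eta|\phi|^{2q+4}+C(\eta,H,K_1,K_2)$, producing an $\eta$-multiple of $\int|\phi|^{2q+4}f^{2q+4}$ plus a pure multiple of $\int f^{2q+4}$. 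Potentially negative contributions on the left—in particular $BH|\phi|^{2q+3}$, since $B<0$—are first transferred to the right and then treated in the same way.

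Collecting everything, the coefficient of $\int|\phi|^{2q+4}f^{2q+4}$ on the left is $\beta_1 = A_\delta$ minus the total of the small $\eta$'s, while the accumulated $f^{2q+4}$ and $|\nabla f|^{2q+4}$ masses yield $\beta_3$ and $\beta_2$ respectively. One can render $\beta_1>0$ by choosing $\eta$, $\varepsilon$, and $\tilde\varepsilon$ small enough, provided the limiting coefficient $A_\delta$ (as $\varepsilon,\tilde\varepsilon\to 0$) is strictly positive, i.e.\ $(q+1)^2 < \bigl(2q+1+\tfrac{2}{n}\bigr)(1-\delta)$. Viewed as a quadratic in $q$, the discriminant equals $4(1-\delta)\bigl(\tfrac{2}{n}-\delta\bigr)$, so real solutions exist precisely when $\delta\le \tfrac{2}{n}$, and the admissible interval is exactly the one stated. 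The main technical obstacle will be the bookkeeping of the $\eta$'s together with the optimization in $\varepsilon$ and $\tilde\varepsilon$: each Young's application introduces an absorbable error that must remain strictly below $A_\delta$, so the sharp range of $q$ and the threshold $\delta\le 2/n$ emerge precisely at the boundary where this absorption procedure just fails.
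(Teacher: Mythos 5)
Your proposal is correct and follows essentially the same route as the paper, which omits the proof and refers to \cite[Theorem 5.1]{caccioppoli}: substitute $f^{q+2}$ into \eqref{teoremaabcdefg}, homogenize every term with weighted Young's inequalities, and read off the positivity of the leading coefficient from $A_\delta>0$, whose discriminant $4(1-\delta)\left(\tfrac{2}{n}-\delta\right)$ indeed yields exactly the stated range of $q$ and the threshold $\delta\le\tfrac{2}{n}$ (and that range automatically satisfies the constraint $q>-\tfrac{n+2}{2n}$ of Theorem \ref{abcdefg}, since $\sqrt{(1-\delta)(\tfrac{2}{n}-\delta)}\le\tfrac12+\tfrac1n-\delta$). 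One remark: your argument produces $|\phi|^{2q+4}$, not $|\phi|^{2q+2}$, in the left-hand integral; this is in fact the version the paper uses in the proof of Theorem \ref{entropiatotale}, so the exponent $2q+2$ in \eqref{beta123eq} appears to be a typo, and in any case the $2q+2$ version follows from yours by one further application of Young's inequality.
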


{Barbara Nelli}

{Dipartimento di Ingegneria e Scienze dell'Informazione e Matematica\\
	Universit\`{a} dell'Aquila}
    
    {barbara.nelli@univaq.it}

{Claudia Pontuale }

{Dipartimento di Ingegneria e Scienze dell'Informazione e Matematica\\
	Universit\`{a} dell'Aquila}
    
    {claudia.pontuale@graduate.univaq.it}


\begin{thebibliography}{abc99xys}

\bibitem[AdC]{alencardocarmo}
\textsc{H. Alencar, M. P. Do Carmo:}
{\em Hypersurfaces of constant mean curvature with finite index and volume of polynomial growth,}
 Arch. Math. (Basel) 60 (1993), no.5, 489-493.
 



\bibitem[AlCa]{AlCa}
\textsc{H. Alencar, M. P. Do Carmo:}
{\em Remarks on the growth of  functions and the weak stability of hypersurfaces with  constant mean curvature,}
An. Acad. Brasil. Ciênc. 69 (1997), no 2, 163-166.



\bibitem[Br1]{Br1} 
\textsc{R. Brooks:} \emph{Exponential growth and the spectrum of the laplacian,} 
Proc. Amer. Math. Soc. 82 (1981), no.3, 473-477.



\bibitem[CMMR]{catinocriticality}
  \textsc{G. Catino, L. Mari, P. Mastrolia, A. Roncoroni:} {\em
  Criticality, splitting theorems under spectral Ricci bounds and the topology of stable minimal hypersurfaces},
 arXiv:2412.12631 (2024).





\bibitem[CHL]{chenhongli}
  \textsc{J. Chen, H. hong, H. Li:} 
  {\em Do Carmo's problem for H- hypersurfaces in $\mathbb R^ 6$},
arXiv:2503.08107 (2025)

 
\bibitem[Che]{cheng}
\textsc{X. Cheng:}
{\em On constant mean curvature hypersurfaces
 with finite index,} Arch. Math. (Basel) 86 (2006), no. 4, 365-374.




\bibitem[ChZh]{ChZh}
\textsc{X. Cheng,  D. Zhou:} {\em Manifolds with weighted Poincarè inequality and uniqueness of minimal hypersurfaces,}  Comm. Anal. Geom., 17 (2009), no.1,  139-154.



\bibitem[ChLi]{ChLi}
  \textsc{O. Chodosh, C. Li:}  {\em Generalized soap bubbles and the topology of manifolds
with positive scalar curvature,} Ann. of Math. (2), 199 (2024), no.2, 707-740.



 \bibitem[CLMS]{chodoshlimintstryker}
  \textsc{O. Chodosh, C. Li, P. Minter, D. Stryker} 
  {\em Stable minimal hypersurfaces in $\mathbb R^ 5$},
 arXiv:2401.01492 (2024)




\bibitem[DC]{DC}
\textsc{M. P. Do Carmo:} 
{\em  Hypersurfaces of Constant Mean Curvature,}
Lecture Notes in Math., 1410, Differential geometry (Pe\~{n}ìscola, 1988), 128-144.


\bibitem[DoPe]{DoPe}
\textsc{M. P. Do Carmo, C.K. Peng:}
{\em Stable complete minimal surfaces in ${\mathbb R}^3$ are planes,} 
Bull. Amer. Math. Soc. (N.S.) 1 (1979), no. 6, 903-906.



 \bibitem[DCZh]{CaZh}
  \textsc{M. P. Do Carmo, D. Zhou} 
  {\em Bernstein-type Theorems in Hypersurfaces
with Constant Mean Curvature},
An. Acad. Bras. Ciênc. 72 (2000), no.3, 301-310.



\bibitem[ENR]{elbertnelliros}
\textsc{M.F. Elbert, B. Nelli, H. Rosenberg:}
{\em  Stable constant mean curvature hypersurfaces,} Proc. Amer. Math. Soc. 135, (2007), no.10, 3359–3366.



\bibitem[FCSc]{FCSc}
\textsc{D. Fischer-Colbrie, R. Schoen:} {\em The structure of complete stable minimal surfaces in
3-manifolds of nonnegative scalar curvature,}
Comm. Pure Appl. Math. 33 (1980), no. 2, 199-211.



\bibitem[Fu]{Fu}
\textsc{H. P.  Fu:}  {\em On complete $\delta$-stable minimal hypersurfaces in ${\mathbb R}^{n+1}$,}
Far East J. Math. Sci. (FJMS), 34(1): 69–79, 2009.







\bibitem[H]{H}\textsc{H. Hong :} {\em  CMC Hypersurfaces with finite index in ${\mathbb H}^4$,}
Adv. Math. 478 (2025).


 \bibitem[HoLiWa]{HoLiWa}\textsc{H. Hong, H. Li, G. Wang:} {\em On $\delta$-stable minimal hypersurfaces in
${\mathbb R}^{n+1}$}, arxiv:2407.03222 (2024).



\bibitem[INS1]{caccioppoli}
\textsc{S. Ilias, B. Nelli, M. Soret:}
  {\em Caccioppoli’s inequalities on constant mean curvature hypersurfaces in Riemannian manifolds},
Ann. Global Anal. Geom. 42 (2012) no. 4, 443-471.



\bibitem[INS2]{entropies}
\textsc{S. Ilias, B. Nelli, M. Soret:}
  {\em On the entropies of hypersurfaces with
 bounded mean curvature},
 Math. Ann. 364,(2016), no. 3-4 1095-1120.


\bibitem[LR]{LR}
\textsc{F. Lopez, A. Ros:} {\em Complete minimal surfaces with index one and stable constant mean curvature surfaces,} 
Comment. Math. Helv 64 (1989), no.1, 34-43.


\bibitem[M]{mazet}
\textsc{L. Mazet:}
{\em Stable minimal hypersurfaces in $\mathbb R^6$},
arXiv:2405.14676 (2024).


\bibitem[Mu]{murata}
\textsc{M. Murata:}
{\em Structure of positive solutions to $(-\Delta + V)u=0$ in $\mathbb R^n$},
Proceedings of the conference on spectral and scattering theory for differential operators (Fujisakura-so, 1986), 64–108.



\bibitem[MePe]{MePe}
\textsc{ W. H. Meeks, III, J. Pèrez, A. Ros:} {\em  Liouville-type properties for
embedded minimal surfaces,} 
Comm. Anal. Geom., 14 (2006), no. 4, 703-723.

\bibitem[P1]{Pinchover1}
\textsc{Y. Pinchover:}
{\em On positive solutions of second-order elliptic equations, stability results, and classification},
Duke Math. J. 57 (1988), no. 3, 955–980.


\bibitem[P2]{pinchover2}
\textsc{Y. Pinchover:}
{\em On criticality and ground states of second order elliptic equations, II},
J. Differential Equations 87 (1990), no. 2, 353–364.


\bibitem[PT]{pinchover3}
\textsc{Y. Pinchover, K. Tintarev:}
{\em  A ground state alternative for singular Schr\"odinger operators},
J. Funct. Anal. 230 (2006), no. 1, 65–77.



\bibitem[PRS]{pigolasettirigoli}
\textsc{S. Pigola, M. Rigoli, A. Setti:}
  {\em Vanishing and finiteness results in geometric analysis: a generalization of the Bochner technique},
  Springer (2008).

  

\bibitem[Po]{Po}
\textsc{A. V. Pogorelov:} {\em  On the stability of minimal surfaces,} 
Soviet Math. Dokl. 24, 274-276 (1981). 



\bibitem[ShXu]{ShXu}
\textsc{K. Shiohama, H. W. Xu:}
{\em The topological sphere theorem for complete submanifolds,}
Compositio Math. 107 (1997), no. 2, 221-232.


\bibitem[Si]{Si}
\textsc{A. Silveira:}
{\em Stability of complete noncompact surfaces with constant mean curvature,}
Math. Ann. 277 (1987), no. 4, 629–638.


\bibitem[TZ]{tinagliazhou}
\textsc{G. Tinaglia, A. Zhou:} {\em Radius estimates for nearly stable H-hypersurfaces of dimension 2, 3, and 4},
online ready Journal of Topology and Analysis.


\end{thebibliography}
\end{document}